\def\bc{\mathbb{C}} 
\def\br{\mathbb{R}}
\def\bn{\mathbb{N}}
\def\bz{\mathbb{Z}}
\def\f{\mathbb{F}}
\newcommand{\com}[1]{#1^{\prime}}
\newcommand{\bcom}[1]{#1^{\prime\prime}}
\newcommand{\inner}[2]{\langle #1,#2\rangle} 
\newcommand{\sumjn}{\sum_{j=1}^{n}}
\newcommand{\qr}{Q^{\rm r}_{{\rm max}}(R)}
\newcommand{\lend}[2]{{\rm End}_{#1}(#2)}
\newcommand{\biend}[2]{{\rm Biend}_{#1}(#2)}
\newcommand{\lhom}[3]{{\rm Hom}_{#1}(#2,#3)}
\newcommand{\ltr}{{\rm Hom}_{\com{C},R}(V^*,U^*)}
\newcommand{\etr}{{\rm End}_{\com{C},R}(U^*)}
\newcommand{\ann}[2]{{\rm ann}_{#1}(#2)}
\newcommand{\im}{{\rm im}\,}
\newcommand{\lm}[1]{{_{#1}{\mathcal{M}}}}
\newcommand{\rmod}[1]{{\mathcal{M}}_{#1}}
\newtheorem{theorem}{Theorem}[section]
\newtheorem{lemma}[theorem]{Lemma}
\newtheorem{co}[theorem]{Corollary}
\newtheorem{pr}[theorem]{Proposition}
\theoremstyle{remark}
\newtheorem{re}[theorem]{Remark}
\theoremstyle{definition}
\newtheorem{de}[theorem]{Definition}
\numberwithin{equation}{section}
\begin{document}

\title[]{Bicommutants and Arens regularity} 
\author{Bojan Magajna} 
\address{Department of Mathematics\\ University of Ljubljana\\
Jadranska 21\\ Ljubljana 1000\\ Slovenia}
\email{Bojan.Magajna@fmf.uni-lj.si}

\thanks{Acknowledgment. The author is grateful to Matej Bre\v sar for his comments on the paper and to Peter \v Semrl
for the discussion concerning Theorem \ref{thA}.} 

%thanks{The author was supported in part by the Ministry of Science
%and Education of Slovenia.}

\keywords{Injective cogenerator, flat module, maximal ring of quotients, Morita duality, Arens products}

\subjclass[2010]{Primary: 16S50; Secondary: 16D40, 16D50, 16D90, 16S85}

\begin{abstract}Let $C$ and $R$ be unital rings,  and $Z$ an injective cogenerator for  right $C$-modules. 
For an $R,C$-bimodule $U$ let  $U^*=\lhom{C}{U}{Z}$, $\com{R}=\lend{R}{U}$  and
$\biend{R}{U}=\lend{\com{R}}{U}$, the biendomorphism ring of $U$.
Under suitable requirements on $U$ we show that $B:=\biend{R}{U}$ can be identified with a subring of $\tilde{B}:=\biend{R}{U^*}$,
study conditions for the reverse inclusion and density of $B$ in $\tilde{B}$. In the case $C$ is contained in the center
of $R$ we describe $\biend{R}{R^*}$ in terms of the Arens products in $R^{**}$  and study  Arens regularity of $R$
in the context of duality of modules. We characterize Arens regular algebras over fields.
\end{abstract}

\maketitle

\section{Introduction}
The notion of a centralizer $\com{R}$ of a subring $R$ in a ring $L$ (that is, the set of all elements of $L$ commuting 
with all elements of $R$) is fundamental in several areas of mathematics. The double centralizer ring $\bcom{R}$ of $R$ 
has continuously attracted attention in algebra (see e.g. \cite{BB}, \cite{CL} and references there
for recent results in this direction) and in functional analysis (\cite{BS} is a
recent example). For an algebra $R$ of operators on a vector space $U$ over a field $\f$ 
the centralizer $\com{R}=\lend{R}{U}$ of $R$ in $L=\lend{\f}{U}$ is usually called the commutant and the 
bicommutant  is just the ring
$\bcom{R}=\biend{R}{U}=\lend{\com{R}}{U}$ of {\em $R$-biendomorphisms} of $U$

Regarding $U$ as a left $R$-module,  the dual space $U^*$ is a right $R$-module by $\rho a:=
\rho\circ a$
($a\in R$, $\rho\in U^*$).  Clearly the adjoint $f^*$ of each endomorphism $f\in \lend{R}{U}$ acts as an endomorphism
of $U^*$, but in general $\lend{R}{U^*}$ also contains many elements which are not adjoint to any linear map $f$ on $U$
(if $U$ is infinite dimensional). 
Therefore for a map $g\in B:=\biend{R}{U}$ the adjoint $g^*$ is not necessarily an element of $\tilde{B}:=\biend{R}{U^*}$. 
For a right $R$-module, such as $U^*$, it is convenient to let biendomorphisms to act from the right,
so that $U^*$ is a right module over $\tilde{B}$,  and to take the ring multiplication in $\tilde{B}$ to 
be the reverse of the composition of maps, so that $R$ can be regarded as a subring of $\tilde{B}$. If in
$\lend{\f}{U^*}$ we reverse the composition of maps, then the involution $*$ is a ring
monomorphism $\lend{\f}{U}\to\lend{\f}{U^*}$ and may be regarded as an inclusion. In this way 
the inclusion 
\begin{equation}\label{000}\biend{R}{U}\subseteq\biend{R}{U^*}\end{equation}
makes sense, although it does not hold in general. Perhaps surprisingly, it turns out that (\ref{000}) and even the 
reverse inclusion hold under the conditions which are not very restrictive. Very special cases of the inclusion 
$\biend{R}{U^*}\subseteq\biend{R}{U}$ are known from functional analysis \cite{HW}
(where susch an inclusion means weak*-continuity of certain maps) and in \cite{M} the author (motivated by a problem
concerning derivations) considered the case when $R$ is generated by a single operator. 

Here we study more general algebras. Although we are not striving for maximal generality, it turns out that the algebraic tools we 
need work equally well also for modules instead of just vector spaces. So we will consider $R,C$-bimodules $U$
over a pair of unital rings $R,C$.  The dual $U^*$ is defined as $U^*=\lhom{C}{U}{Z}$, where
$Z$ is  an injective cogenerator in the category of right $C$-modules.
(The definition of such modules is recalled below.)
We will show that if $U$ as a left $R$-module is 
torsionless and trace accessible (that is, $TU=U$, where $T$ is the trace
ideal of $U$ in $R$; for example, $U$ may be a projective $R$-module or a generator), then (\ref{000}) holds.  The reverse inclusion
does not hold in general, but if $U$ is faithful and flat as a left $R$-module then 
\begin{equation}\label{00}\biend{R}{(U^*)^G}\subseteq\biend{R}{U^{(G)}} \ (\cong\biend{R}{U})\end{equation} for all sufficiently large 
exponents $G$. (Here $X^G$ denotes the direct product and $X^{(G)}$ the direct sum of copies of a module $X$.) 
We will also consider situation when no exponent $G$ is needed in (\ref{00}) and the density of $\biend{R}{U}$ in
$\biend{R}{U^*}$. In the proofs of these results we will use the maximal ring of 
quotients $\qr$. The use of this algebraic tool (which is not available, or at least not so effective, in functional analysis) 
nevertheless  forces  perhaps to restrictive assumptions for the validity of our results and the author hopes that experts in
algebra will be able to generalize the results considerably. With a 
different method, assuming that $C$ is contained in the center of $R$ and that $Z$ satisfies $\biend{C}{Z}=C$ (but without assuming that $TU=U$ or 
that $U$ is torsionless),  we will show here that the 
inclusion (\ref{000}) also holds for a large class of modules which includes all finitely related modules.

As a consequence of the results mentioned above it will follow that for many familiar algebras $R$ over a field the ring $\biend{R}{R^*}$
is the minimal possible, namely $R$. In an attempt to describe the ring $\biend{R}{R^*}$ in general, we consider the situation
when  $C$ is contained in the center of $R$ and observe that on $R^{**}:=\lhom{\com{C}}{R^*}{Z}$, where $\com{C}=\lend{C}{Z}$, there are 
two Arens associative products (familiar to specialists in Banach algebra theory). We describe
the ring $\biend{R}{R^*}$ in terms of these products  and characterize in purely algebraic terms when $R$ is
Arens regular (that is, when the two products
on $R^{**}$ coincide). If $R$ is Arens regular, $\biend{R}{R^*}$ turns out to maximal possible, namely $R^{**}$.  In Banach and locally convex algebra theory many interesting papers have been published concerning Arens products
(see \cite{P} and the references there) beginning with \cite{Ar}, \cite{Ar1}. The basic characterization of Arens regularity
in Banach algebras is in terms of weak compactness of certain maps. We will first prove here some basic results concerning Arens 
regularity within the purely algebraic context of duality theory of modules, then we specialize to algebras over fields. Since many 
familiar algebras $R$ over a field $\f$ have the property that $\biend{R}{R^*}=R$, while $\biend{R}{R^*}=R^{**}$ for Arens 
regular algebras (if the duality is defined in terms of $Z=\f$), the infinite dimensional Arens regular algebras over a field must be very special. We will show  that 
every such algebra contains an ideal $J$ of finite codimension with $J^2=0$. This may not be so for algebras over more general commutative
rings.

\section{Preliminaries}
 
{\em All rings here are assumed to be unital and the unit $1$ acts as the identity on all modules.} The category of all left (right)
modules over a ring $R$ is denoted by $\lm{R}$ ($\rmod{R}$, respectively).
For a faithful $X\in\rmod{R}$ we would like to consider $R$ as a subring of its biendomorphism ring 
$\biend{R}{X}=\lend{\lend{R}{X}}{X}$,
hence $X$ is regarded as a right module over $\biend{R}{X}$ and the ring multiplication in $\biend{R}{X}$ is the
reverse of the composition of maps. This suggests us to follow the convention of writing maps on the opposite side of 
scalars in general. Thus if $X\in\rmod{R}$, $x\in X$ and
$g\in\biend{R}{X}$, $xg$ means $g(x)$;  such an expression will be also written as $\inner{x}{g}$ if this
contributes to a greater clarity. 
  
Every $R$-module $U$ is contained in its {\em injective hull} $E(U)$ as a {\em large submodule} 
(that is, $U$ has nonzero intersection with all nonzero submodules of $E(U)$, see \cite{La}, \cite{Ro}). 

Regarding a ring $R$ as a right $R$-module, let $E=E(R)$, $H:=\lend{R}{E}$ and $\qr:=\lend{H}{E}$, the
{\em maximal right ring of quotients of $R$}. 

If $X\in\rmod{R}$ is injective and contains a copy of $R$ as a submodule, then $X$ contains also
a copy of $E=E(R)$, hence  $X$ 
is isomorphic to $E\oplus Y$ for a submodule $Y$ of $X$. Since each $\phi\in\biend{R}{X}$  commutes 
with the projection of $X$ onto $E$, it follows that $E\phi\subseteq E$ and therefore the restriction defines a ring 
homomorphism $\eta:\biend{R}{X}\to
\biend{R}{E}$.  
Since for each $x\in X$ the map $R\to X$, $r\mapsto xr$, can be extended to a homomorphism
$f:E\to X$ of $R$ modules, $X$ is the sum of the images of all such homomorphisms $f$ and consequently
$\eta$ must be injective \cite[14.1]{AF}. (Indeed, if $g\in\biend{R}{X}$ annihilates $E$, then $(fE)g=f(Eg)=0$ for all
$f\in\lhom{R}{E}{X}$, hence $Xg=0$.) Thus we may regard $\biend{R}{X}$ as a subring
in $\biend{R}{E}=\qr$. We need a more precise identification of this subring.

\begin{de}\label{deq} Let $Q_X$ be the set of all those $q\in\qr$ for which $f(1)=0$ implies that $f(q)=0$, whenever 
$f\in\lhom{R}{E}{X}$.
\end{de}

It can be verified that $Q_X$ is a subring of $\qr$ and clearly $R\subseteq Q_X$.

\begin{de}For each $x\in X$ and $q\in Q_X$ take $f\in\lhom{R}{E}{X}$ to be any extension of the map $\mu_x:R\to X$,
$\mu_x(r):=xr$, and define
$$xq:=f(q).$$
\end{de}

That this definition is unambiguous follows easily from the definition of $Q_X$. Part (i) of the following lemma
and its Corollary \ref{th2} are known \cite[p. 206]{S};  
short proofs are included here for convenience of readers. 

\begin{lemma}\label{le2}Suppose that $X\in\rmod{R}$ is injective and contains a copy of $R$ (as above). 

(i) Then the map $\varrho:Q_X\to\biend{R}{X}$, $\varrho(q)=\varrho_q$, where $\varrho_q$ is 
the right multiplication by $q$ on $X$, is an isomorphism of rings.

(ii) For each finite set $F\subseteq Q_X$, $\ann{X}{F^{-1}R}=0$, where $F^{-1}R:=\{r\in R:\, Fr\subseteq R\}$.
\end{lemma}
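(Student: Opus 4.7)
I would first establish an auxiliary identity: for every $R$-linear $\psi\colon E\to X$ and every $q\in Q_X$, $\psi(q)=\psi(1)\,q$. The proof is short: choose $f\in\lhom{R}{E}{X}$ extending $\mu_{\psi(1)}\colon r\mapsto\psi(1)r$, so that $\psi-f$ vanishes on $R$; the defining property of $Q_X$ then forces $(\psi-f)(q)=0$, giving $\psi(q)=f(q)=\psi(1)\,q$. This identity immediately yields well-definedness of $xq$ (independence from the choice of extension), additivity of $\varrho_q$, and the fact that $Q_X$ is closed under addition and multiplication in $\qr$.

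For part (i), I would next check $\varrho_q\in\biend{R}{X}$ by applying the identity to $\psi=\phi\circ f_x$ for any $\phi\in\lend{R}{X}$, which gives $\phi(xq)=\phi(x)\,q$. Multiplicativity of $\varrho$ is the delicate step: I would upgrade the identity from $e=1$ to arbitrary $e\in E$ by choosing $\lambda_e\in H:=\lend{R}{E}$ with $\lambda_e(1)=e$ and applying the identity to $f_x\circ\lambda_e$ to obtain $f_x(eq)=f_x(e)\,q$ (using that $q\in\qr=\lend{H}{E}$ commutes with $\lambda_e$, so $\lambda_e(q)=\lambda_e(1\cdot q)=eq$). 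Specializing to $e=1\cdot q_1$ and using associativity $(1\cdot q_1)\cdot q_2=1\cdot(q_1q_2)$ in $E$ gives $x(q_1q_2)=(xq_1)q_2$. Injectivity of $\varrho$ is clear, since $\varrho_q=0$ forces $1\cdot q=0$ in $E$, hence $q=0$ in $\qr$. For surjectivity, given $g\in\biend{R}{X}$ I would restrict to $E$ to obtain $q\in\biend{R}{E}=\qr$; to verify $q\in Q_X$, take $f\in\lhom{R}{E}{X}$ with $f(1)=0$, extend to $\tilde f\in\lend{R}{X}$ by injectivity of $X$, and use the commutativity of $\tilde f$ with $g$: $f(q)=\tilde f(1\cdot g)=\tilde f(1)\cdot g=0$.

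For part (ii), I would first dispose of $F=\{q\}$, writing $I_q:=\{r\in R:qr\in R\}$. Assuming for contradiction $0\ne x\in\ann{X}{I_q}$, I would define $\phi\colon R+qR\to X$ (identifying $q$ with $1\cdot q\in E$) by $\phi(r+qr'):=xr'$. The relations in $E$ among $r$ and $qr'$ reduce to the condition $r'\in I_q$, on which $xr'=0$, so $\phi$ is a well-defined $R$-linear map with $\phi|_R=0$ but $\phi(q)=x\ne 0$; extending to $\tilde\phi\colon E\to X$ by injectivity of $X$ contradicts $q\in Q_X$. The general finite case then follows from the embedding $R/F^{-1}R\hookrightarrow\bigoplus_{q\in F}R/I_q$, $r+F^{-1}R\mapsto(r+I_q)_q$: injectivity of $X$ turns it into a surjection on $\lhom{R}{-}{X}$, and each summand $\lhom{R}{R/I_q}{X}\cong\ann{X}{I_q}$ vanishes by the case just handled, so $\ann{X}{F^{-1}R}\cong\lhom{R}{R/F^{-1}R}{X}=0$.

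The main obstacle I anticipate is the upgrade of the identity from $e=1$ to arbitrary $e\in E$ in the multiplicativity step, which requires invoking the structure $\qr=\lend{H}{E}$ through the auxiliary $\lambda_e$; once the identity $\psi(q)=\psi(1)q$ is established and the single-generator case of (ii) is done, the remaining arguments are essentially formal.
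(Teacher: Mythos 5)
Your proof is correct, but it is organized quite differently from the paper's in both parts, so let me compare. For (i), the paper never verifies multiplicativity of $\varrho$ by hand: it uses the injective restriction homomorphism $\eta:\biend{R}{X}\to\biend{R}{E}=\qr$ (set up just before the lemma), checks $\eta\varrho=1_{Q_X}$ and $\im\eta\subseteq Q_X$, and deduces $\varrho\eta=1$ from $\eta\varrho\eta=\eta$ and injectivity of $\eta$; since $\eta$ is evidently a ring map, $\varrho$ is a ring isomorphism for free. You instead prove $x(q_1q_2)=(xq_1)q_2$ directly, which forces you through the upgrade $f_x(eq)=f_x(e)q$ via $\lambda_e\in H$ and the $H$-linearity of $q$ --- correct, but more computational; note also that closure of $Q_X$ under multiplication really needs this upgrade and not just the identity $\psi(q)=\psi(1)q$ (applying that identity to $q_1q_2$ directly would be circular), so it is not quite ``immediate'' in the order you state things. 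For (ii), the paper argues in one stroke inside $E^n$ with the map $qr+s\mapsto xr$ defined on $qR+R^n$, whereas you reduce to the singleton case and then use the embedding $R/F^{-1}R\hookrightarrow\bigoplus_{q\in F}R/I_q$ together with $\lhom{R}{R/J}{X}\cong\ann{X}{J}$ and injectivity of $X$; this is an equally valid, arguably cleaner, formal alternative.

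One small point you should supply in (i): after restricting $g\in\biend{R}{X}$ to $E$ and showing the resulting $q$ lies in $Q_X$, surjectivity is not yet finished --- you still need $\varrho_q=g$. This is a one-line check by your own method ($xq=\tilde f_x(1\cdot g)=\tilde f_x(1)\,g=xg$ for $\tilde f_x\in\lend{R}{X}$ extending $f_x$), or it follows from the injectivity of the restriction map $\eta$ established in the paper's preamble, since $\varrho_q$ and $g$ agree on $E$. With that sentence added, the argument is complete.
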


\begin{proof}(i) First observe that 
$\varrho_q$ is in $\biend{R}{X}$. Namely, given $x\in X$ and $q\in Q_X$,
let $f\in\lhom{R}{E}{X}$ satisfy $f(1)=x$, so that $xq=f(q)$. If
$g\in\lend{R}{X}$, then $gf\in\lhom{R}{E}{X}$ and $(gf)(1)=g(x)$, hence $g(x)q=(gf)(q)$, and we now conclude that
$g(xq)=g(f(q))=g(x)q$, so $\varrho_q\in\biend{R}{X}$.  Since  $\varrho_q|E$ is just right multiplication by $q$ on $E$, 
$\eta\varrho=1_{Q_X}$, where $\eta$ is the restriction map $\biend{R}{X}\to\biend{R}{E}
=\qr$. If we can show that $\im\eta\subseteq Q_X$, then $\varrho\eta$ is defined and from $\eta\varrho\eta=\eta$
and the injectivity of $\eta$ we will have that $\varrho\eta=1_{\biend{R}{X}}$, so
$\varrho$ and $\eta$ will be isomorphisms. To prove the inclusion $\im\eta\subseteq Q_X$, let $\phi\in\biend{R}{X}$ 
and note that $\phi|E$ is the right
multiplication by an element $q\in\qr$; we must show that $q\in Q_X$. So, let $f\in\lhom{R}{E}{X}$ satisfy
$f(1)=0$ and extend it to $f\in\lend{R}{X}$. Then $f(q)=f(1q)=f((1)\phi)=(f(1))\phi=0$, hence $q\in Q_X$.

(ii) Let $x\in X$, $F=\{q_1,\ldots,q_n\}$ and denote $q=(q_1,\ldots,q_n)$ and $D_q:=F^{-1}R$. If $xD_q=0$, then the map 
$qr+s\mapsto xr$ from the submodule
$qR+R^n$ of $E^n$ to $X$ is a well-defined homomorphism $f_0$ of right $R$-modules. (Namely, $qr+s=0$ implies that 
$qr=-s\in R^n$, hence $r\in D_q=F^{-1}R$ and therefore $xr=0$.) Hence $f_0$ can be extended to a map
$f\in\lhom{R}{E^n}{X}$. Note that $f$ is just an $n$-tuple of maps $f_j\in\lhom{R}{E}{X}$. Since $f(R^n)=0$,
$f_j(1)=0$ for all $j$. Hence $f_j(q_j)=0$, since $q_j\in Q_X$, and we conclude that $x=f(q)=\sumjn f_j(q_j)=0$.
\end{proof}

\begin{co}\label{th2}Let $X\in\rmod{R}$ be injective and faithful and $G\subseteq X$ a separating set for $R$
(that is, $Gr=0$ implies $r=0$ if $r\in R$). Then the ring $\biend{R}{X^G}$ is 
isomorphic to the subring $Q_{X^G}$ of $\qr$.
Thus if $X$ has a finite separating subset for $R$ then $\biend{R}{X}$ is isomorphic to a subring of $\qr$.
\end{co}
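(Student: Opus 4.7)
The plan is to reduce both assertions to Lemma \ref{le2} by arranging for $X^G$ (respectively $X^n$) to satisfy the hypotheses of that lemma. The key observation is that direct products of injective modules are again injective, so $X^G\in\rmod{R}$ is injective whenever $X$ is; and the separating property of $G$ is precisely what allows us to embed $R$ as a right $R$-submodule of $X^G$.

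First I would define the map $\iota:R\to X^G$ by $\iota(r)=(gr)_{g\in G}$. This is clearly a homomorphism of right $R$-modules, and its kernel is trivial because $\iota(r)=0$ forces $gr=0$ for every $g\in G$, hence $r=0$ by the separating hypothesis. Thus $X^G$ is an injective right $R$-module containing a copy of $R$, so Lemma \ref{le2}(i) applies and yields the first claim
\[\biend{R}{X^G}\cong Q_{X^G}\subseteq\qr.\]

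For the second assertion, suppose $G=\{x_1,\dots,x_n\}$ is a finite separating subset, so that $X^G=X^n$. I would invoke the standard identification $\biend{R}{X^n}\cong\biend{R}{X}$: any $\phi\in\biend{R}{X}$ gives rise to the ``diagonal'' map $\phi^{(n)}(x_1,\dots,x_n):=(x_1\phi,\dots,x_n\phi)$, which commutes with $\lend{R}{X^n}=\matn{\lend{R}{X}}$, and conversely, commutation with the matrix units $e_{ij}\in\matn{\lend{R}{X}}$ forces every element of $\biend{R}{X^n}$ to have this diagonal form (with a single component that automatically lies in $\biend{R}{X}$). Combining this natural isomorphism with the first part applied to $G=\{x_1,\dots,x_n\}$ embeds $\biend{R}{X}$ as $Q_{X^n}$ inside $\qr$, completing the proof.

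The only point requiring care is the matrix-units argument for $\biend{R}{X^n}\cong\biend{R}{X}$: since we write biendomorphisms on the opposite side from scalars, one has to be deliberate about which side each action occurs on, but no genuine obstacle arises. The other potential annoyance is that $G$ may be infinite, but since the first conclusion is stated directly for $X^G$ and the lemma has no finiteness assumption, infinitude is harmless.
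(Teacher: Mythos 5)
Your proof is correct and follows essentially the same route as the paper: embed $R$ into the injective module $X^G$ via $r\mapsto(gr)_{g\in G}$ using the separating property, apply Lemma \ref{le2} to $X^G$, and for finite $G$ invoke the standard isomorphism $\biend{R}{X^n}\cong\biend{R}{X}$ (which the paper simply cites from Anderson--Fuller rather than re-deriving via matrix units as you sketch).
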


\begin{proof}The injection $R\to X^{G}$, $r\mapsto (gr)_{g\in G}$, extends to a monomorphism $E\to X^G$, hence we 
may  apply Lemma \ref{le2} to $X^G$ instead of $X$. 
It is well-known \cite[14.2]{AF} that for finite $G$ the rings $\biend{R}{X^G}$ and $\biend{R}{X}$ are 
isomorphic. 
\end{proof}

We recall that a submodule $Y$ of a right $R$-module $X$ is called {\em dense} in $X$ if for all $x,y\in X$ with $x\ne0$
there exists an $r\in R$ such that $xr\ne0$ and $yr\in Y$.

By definition the {\em trace ideal} $T(U)$ of a left $R$-module $U$ consists of all finite sums of elements of the form
$\inner{u}{f}$, where $u\in U$ and $f\in \lhom{R}{U}{R}$. It is well known  that $T:=T(U)$ is indeed
a two-sided ideal in $R$, which is idempotent (that is, $T^2=T$) if $TU=U$. For projective modules the following lemma is a part of \cite[1.16]{CF}; we will present
a short proof in order to see that the condition $U=TU$ suffices for our purposes. Modules satisfying $U=TU$ are
called {\em $T$-accessible} in \cite{Sa} and \cite{Z}. It is well-known that generators and projective modules are $T$-accessible, 
for more examples see  \cite[Section 3]{Z}.

\begin{lemma}\label{le11} If $U\in\lm{R}$ is faithful (so that we may regard $R$ as a subring of $B:=
\biend{R}{U}$), then $T$ is a left ideal in $B$. If in addition $U$ is $T$-accessible, 
then  $T$ is a dense right $R$-submodule of $B$ (hence $R$ is also dense in $B$).
\end{lemma}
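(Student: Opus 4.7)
The linchpin is a commutation identity derived from the definition of $B$. For any $f \in \lhom{R}{U}{R}$ and any fixed $w \in U$, the map $\phi_{f,w}\colon U \to U$ sending $u \mapsto f(u)w$ is $R$-linear---since $\phi_{f,w}(ru) = f(ru)w = rf(u)w = r\phi_{f,w}(u)$---hence lies in $\com{R} = \lend{R}{U}$. In the paper's convention $R$ and $B$ act on $U$ on the left while $\com{R}$ acts on the right, so commutation of $b \in B$ with $\phi_{f,w}$ takes the form $b(\phi_{f,w}(u)) = \phi_{f,w}(b(u))$, yielding the identity
\[
 b(f(u)w) = f(b(u))w \qquad \text{for all } u, w \in U,\; f \in \lhom{R}{U}{R},\; b \in B.
\]

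With this identity in hand the left-ideal assertion is essentially a one-liner. A generator $t = f(u)$ of $T$ acts on $U$ as left multiplication by $t$, so for any $w \in U$,
\[
 (bt)(w) = b(tw) = b(f(u)w) = f(b(u))w.
\]
Thus $bt$ coincides with left multiplication by $f(b(u)) \in T$, so $bt \in T$. Extending by linearity in $t$ gives $BT \subseteq T$, and combined with the fact that $T$ is already a two-sided ideal of $R$, this shows $T$ is a left ideal of $B$.

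For density, I exploit the first part to short-circuit half of the density condition. Given $b_1, b_2 \in B$ with $b_1 \ne 0$, I restrict attention to candidates $r$ lying in $T$; then $b_2r \in T$ automatically. To secure $b_1r \ne 0$ for some such $r$, pick $w \in U$ with $b_1(w) \ne 0$ (possible because $B$ acts faithfully on $U$) and use $TU = U$ to write $w = \sum_j t_jv_j$ with $t_j \in T$, $v_j \in U$. Then
\[
 0 \ne b_1(w) = \sum_j b_1(t_jv_j) = \sum_j (b_1t_j)(v_j),
\]
so $b_1t_j \ne 0$ in $B$ for some $j$, and $r := t_j$ does the job. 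Finally, density of $R$ in $B$ follows because density of a right $R$-submodule is inherited by every larger right $R$-submodule, and $T \subseteq R$.

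The main conceptual hurdle is the opening observation that $\phi_{f,w}$ is $R$-linear, hence in $\com{R}$---this is what allows $b$ to ``slide past'' scalars coming from $T$. Everything else is careful bookkeeping with the left/right action conventions, which must be tracked since $R$, $B$, and $\com{R}$ act on different sides of $U$.
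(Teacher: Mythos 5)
Your proof is correct and follows essentially the same route as the paper: the key commutation identity $b(f(u)w)=f(b(u))w$ is exactly the paper's observation that $b$ commutes with the endomorphism $f\diamond w:u\mapsto f(u)w$, yielding $bT\subseteq T$, and the density argument (reduce to finding $t\in T$ with $bt\neq 0$, then use $TU=U$) is the same as the paper's contrapositive version. No gaps.
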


\begin{proof}   For each $v\in U$ and $f\in
\lhom{R}{U}{R}$ denote by $f\diamond v$ the endomorphism of $U$ defined by $\inner{u}{f\diamond v}:=\inner{u}{f}v$ ($u\in U$).
Recall that $R$ is regarded as a subring of $B$ by identifying each $r\in R$ with the left multiplication $\lambda_r$ on $U$.
Then, since a biendomorphism $g\in B$ commutes with all endomorphisms (hence also with $f\diamond v$), for all 
$u,v\in U$ we have that $(g\inner{u}{f})v=(g\circ\lambda_{\inner{u}{f}})v=g(\inner{u}{f}v)=g(\inner{u}{f\diamond v})=
\inner{gu}{f\diamond v}=\inner{gu}{f}v$,
hence  
\begin{equation}\label{111}g\inner{u}{f}=\inner{gu}{f}\in T.\end{equation}
This implies that $T$ is a left ideal in $B$.

Now, since $BT\subseteq T$, to show that $T$ is dense in $B$, it suffices to show that for any nonzero
$b\in B$ there exists $t\in T$ such that $bt\ne0$. If there were no such
$t$, then $bT=0$, but then (since $U=TU$ by assumption) $b(U)=b(TU)=(bT)(U)=0$, hence
$b=0$. 
\end{proof}

Since $R$ is dense in $B$ by Lemma \ref{le11}, we may regard $B$ as a subring of $\qr$ by \cite[13.11]{La}.
That for a projective $R$-module $U$ the ring $B$ is contained
in $\qr$ was proved in \cite[1.16]{CF} and in \cite[2.3]{CRT}. We will need a more precise description of
$B$ for a more general class of modules provided by Lemma \ref{le12} below.
Note that since $R$ is dense in $\qr$ (as a right $R$-submodule) and since by Lemma \ref{le11} $T$ is dense in $R$ (assuming that  $TU=U$ 
and $U$ is faithful), $T$ is dense in $\qr$, hence the left annihilator of $T$ in $\qr$ is $0$. (To see this, apply the definition of density
to elements $q,1\in\qr$: if $q\ne0$ there exists $r\in R$ such that $qr\ne0$ and $1r\in T$, hence $qT\ne0$.)
Recall that a module $U\in\lm{R}$ is {\em torsionless}  if for each nonzero $u\in U$
there exists an $f\in\lhom{R}{U}{R}$ with $\inner{u}{f}\ne0$ (that is, $U$ can be embedded into $R^G$ for a sufficiently large
$G$). 

\begin{lemma}\label{le12}Let $U$ be a faithful left $R$ module, $T$ the trace ideal of $U$ in $R$ and $B=\biend{R}{U}$.
Suppose that $U$ is torsionless and $T$-accessible. Then $B=\{q\in\qr:\, qT\subseteq T\}$. Moreover, $D_qU=U$ for each 
$q\in B$, where $D_q=q^{-1}R$.
\end{lemma}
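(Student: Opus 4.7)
The plan is to prove the two inclusions in the equality $B=\{q\in\qr:qT\subseteq T\}$ separately, and then to derive the ``moreover'' clause. For $B\subseteq\{q\in\qr:qT\subseteq T\}$, Lemma \ref{le11} already asserts that $T$ is a left ideal in $B$; combined with the embedding $B\subseteq\qr$ (coming from density of $R$ in $B$ and \cite[13.11]{La}), this reads exactly as $qT\subseteq T$ for every $q\in B$. The moreover clause is then immediate: $qT\subseteq T\subseteq R$ forces $T\subseteq D_q$, so $U=TU\subseteq D_qU$.

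For the reverse inclusion, fix $q\in\qr$ with $qT\subseteq T$. I would realize $q$ as a biendomorphism of $U$ via the following recipe: using $T$-accessibility, every $u\in U$ admits at least one decomposition $u=\sum_it_iu_i$ with $t_i\in T$ and $u_i\in U$, and I set $qu:=\sum_i(qt_i)u_i$, which makes sense because each product $qt_i$ lies in $T\subseteq R$. The two crucial verifications are then (a) independence of the decomposition, and (b) that the resulting operator commutes with every $\phi\in\com{R}=\lend{R}{U}$, so that it belongs to $B$.

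For (a), if $\sum_it_iu_i=0$ then for every $f\in\lhom{R}{U}{R}$ one has $\sum_it_i\inner{u_i}{f}=\inner{\sum_it_iu_i}{f}=0$ in $R$; left-multiplying by $q$ in $\qr$ yields $\sum_i(qt_i)\inner{u_i}{f}=\inner{\sum_i(qt_i)u_i}{f}=0$, and torsionlessness of $U$ then forces $\sum_i(qt_i)u_i=0$. For (b), writing $u=\sum t_iu_i$ and applying left $R$-linearity of $\phi$ twice gives $u\phi=\sum t_i(u_i\phi)$, whence $q(u\phi)=\sum(qt_i)(u_i\phi)=(\sum(qt_i)u_i)\phi=(qu)\phi$. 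To check that the resulting element of $B$ corresponds to the starting $q$ under $B\subseteq\qr$, I would test against $r\in T$: the one-term decomposition $ru=r\cdot u$ yields $q(ru)=(qr)u$, so the product of our operator with $r$ inside $B$ is the element $qr\in R$; since $T$ is dense in $\qr$ (as noted in the paragraph preceding the statement), elements of $\qr$ are determined by their right-multiplication action on $T$, and this forces agreement with the original $q$.

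The main obstacle is the well-definedness step (a); it is the only point at which torsionlessness of $U$ is genuinely used, and it is precisely the mechanism that lets the one-sided criterion $qT\subseteq T$ suffice to recognize elements of $B$ inside $\qr$.
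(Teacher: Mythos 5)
Your proposal is correct and follows essentially the same route as the paper: both directions use Lemma \ref{le11} for the forward inclusion, and for the reverse inclusion both define the operator on $U$ via a $T$-accessibility decomposition, prove well-definedness by pairing with functionals $f\in\lhom{R}{U}{R}$ and invoking torsionlessness, and identify the resulting biendomorphism with $q$ through its action on $T$ together with $\ann{\qr}{T}=0$. The only cosmetic difference is that you decompose $u=\sum t_iu_i$ with general $t_i\in T$ while the paper writes $t_i=\inner{u_i}{f_i}$ explicitly; the ``moreover'' clause is handled identically.
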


\begin{proof}By Lemma \ref{le11} $qT\subseteq T$ for each $q\in B$. Assume now that $q\in\qr$ and $qT\subseteq T$.
Since $TU=U$, each $u\in U$ can be expressed as a finite sum $u=\sum \inner{u_i}{f_i}v_i$, where $u_i,v_i\in U$ and $f_i\in
\lhom{R}{U}{R}$ may depend on $u$. Define a map $g:U\to U$ by $g(u)=\sum (q\inner{u_i}{f_i})v_i$ (which makes sense since 
$q\inner{u_i}{f_i}\in R$ because $\inner{u_i}{f_i}\in T$ and $qT\subseteq T$). To show that $g$ is well-defined,
it suffices to prove that $\sum \inner{u_i}{f_i}v_i=0$ implies that $\sum (q\inner{u_i}{f_i})v_i=0$. For this, since
$U$ is torsionless, it suffices to observe that
for each $f\in\lhom{R}{U}{R}$ we have
\begin{align*}\inner{\sum (q\inner{u_i}{f_i})v_i}{f}=\sum (q\inner{u_i}{f_i})\inner{v_i}{f}
=q\sum \inner{u_i}{f_i}\inner{v_i}{f}\\
=q\inner{\sum \inner{u_i}{f_i}v_i}{f}=0.
\end{align*}
It is straightforward to verify that $g$ commutes with all $R$-endomorphisms of $U$ so that $g\in B$. To show that
$g$, when regarded as an element of $\qr$, is just $q$, it suffices to show that $qt=gt$ for each $t\in T$ (since
$\ann{\qr}{T}=0$ as observed above). If $t$ is of the form $t=\inner{u}{f}$ ($u\in U$, $f\in\lhom{R}{U}{R}$), then by 
(\ref{111}) $gt=\inner{g(u)}{f}$, hence $(gt)v=\inner{g(u)}{f}v$ for all $v\in U$. But by the definition of $g$ we have 
$g(tv)=g(\inner{u}{f}v)=(q\inner{u}{f})v=(qt)v$, hence $(gt)v=(qt)v$. Since $U$ is faithful, this implies that $gt=qt$. 
Since each
element of $T$ is a sum of elements of the form $\inner{u}{f}$, this concludes the proof that $g=q$ in $\qr$.

If $q\in B$, then as proved above $qT\subseteq T$, hence $T\subseteq D_q$ and it follows that
$D_qU\supseteq TU=U$.
\end{proof}

Recall that a module $U\in\lm{R}$ is a {\em generator} if for some $n$ there is an epimorphism $U^n\to R$.

\begin{co}\label{co121}If $U\in\lm{R}$ is a  torsionless generator, then $\biend{R}{U}=R$.
\end{co}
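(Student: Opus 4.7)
The plan is to reduce the statement to Lemma \ref{le12} by showing that the trace ideal $T=T(U)$ coincides with $R$ itself. Once we have $T=R$, the description $B=\{q\in\qr:\, qT\subseteq T\}$ collapses to $\{q\in\qr:\, q\cdot 1\in R\}=R$, which is exactly what is claimed.

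First I would verify that the hypotheses of Lemma \ref{le12} are met. Since $U$ is a generator, there is a surjection $\pi:U^n\to R$; writing $\pi$ coordinate-wise gives $R$-linear maps $f_1,\ldots,f_n\in\lhom{R}{U}{R}$, and picking $(u_1,\ldots,u_n)$ with $\pi(u_1,\ldots,u_n)=1$ yields $1=\sum_{i=1}^n \inner{u_i}{f_i}\in T$. Hence $T=R$. This immediately gives $TU=RU=U$ (so $U$ is $T$-accessible) and also that $U$ is faithful: if $r\in R$ annihilates $U$, then $r=r\cdot 1=r\pi(u_1,\ldots,u_n)=\pi(ru_1,\ldots,ru_n)=0$. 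Together with the torsionless hypothesis, Lemma \ref{le12} applies.

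Applying Lemma \ref{le12} with $T=R$ I obtain
\[
B=\biend{R}{U}=\{q\in\qr:\, qR\subseteq R\}.
\]
For any such $q$, taking the particular element $1\in R$ gives $q=q\cdot 1\in R$, so $B\subseteq R$. The reverse inclusion $R\subseteq B$ is automatic from the faithfulness of $U$ (each $r\in R$ acts as a biendomorphism via left multiplication). Thus $B=R$.

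The only step that is not entirely formal is the identification $T=R$ for generators, but this is the standard characterization of generators and drops out of the definition in one line. No further obstacle is expected; the corollary is essentially a direct specialization of Lemma \ref{le12} once one notices that the trace ideal of a generator is the whole ring.
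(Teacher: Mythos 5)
Your proof is correct and follows essentially the same route as the paper: both reduce to Lemma \ref{le12} via the standard fact that the trace ideal of a generator is all of $R$, and then conclude $q = q\cdot 1 \in R$. Your explicit verification that a generator is automatically faithful (which Lemma \ref{le12} requires) is a welcome detail the paper leaves implicit.
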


\begin{proof}Since $U$ is a generator, $T=R$ \cite[18.8]{La}, hence $TU=U$. We have observed in the proof of Lemma
\ref{le12} that $D_q\supseteq T$ for each $q\in B$, hence $D_q=R$. Thus $q=q1\in R$ (since
$1\in D_q$).
\end{proof}

A module $Z\in\rmod{C}$ is called a {\em cogenerator}  
if for every $U\in\rmod{C}$ and every nonzero $u\in U$ there exists an $f\in\lhom{C}{U}{Z}$
such that $f(u)\ne0$.

{\em Throughout the paper  $Z\in\rmod{C}$ will always be an injective cogenerator,
$\com{C}$ will be the ring $\com{C}:=\lend{C}{Z}$ and $\bcom{C}:=\lend{\com{C}}{Z}$.} So $Z$ is a 
$\com{C},\bcom{C}$-bimodule and $C$ is regarded as a subring of $\bcom{C}$ by identifying each $c\in C$ with
the corresponding right multiplication on $Z$. (Note that $Z$ is faithful as a $C$-module since it is a cogenerator.)
If $C$ is abelian then $C\subseteq\bcom{C}\subseteq\com{C}$.

\begin{de}Given a fixed injective cogenerator $Z\in\rmod{C}$, for any  $U\in\rmod{C}$ the {\em dual} is defined by  
$U^*:=\lhom{C}{U}{Z}$. Then $U^*$ is  a left $\com{C}$-module by $t\rho:=t\circ\rho$, 
where $t\in \com{C}$ and $\rho\in U^*$. For a left module $W\in\lm{\com{C}}$ the dual is defined by $W^*:=\lhom{\com{C}}{W}{Z}$. 
In particular for $U\in\rmod{C}$ we have the bidual $U^{**}=(U^*)^*$. Then $U^{**}$ is a right $\bcom{C}$-module
by $\theta s:=s\circ\theta$, where $\theta\in U^{**}$, $s\in\bcom{C}$(recall that the multiplication in $\bcom{C}$
is the reverse of the composition of maps).
Then we can define $U^{***}:=\lhom{\bcom{C}}{U^{**}}{Z}$
(homomorphisms of right $\bcom{C}$-bimodules).
\end{de}

We will need the second and the third dual only in the case $C$ is commutative, contained in the center of $R$.

In the special case when $Z=C$ is a field, $U^{**}$ is just the familiar bidual of $U$.  {\em To avoid a possible ambiguity, 
in this paper  $U$ will always denote a right $C$-module and $U^*$, $U^{**}$ and $U^{***}$ its consecutive duals
as defined in the above definition.}

The natural homomorphism 
$\iota:U\to U^{**}$ of  
$C$-modules is injective since $Z$ is a cogenerator in $\rmod{C}$.
We will also use the notation 
$\inner{\rho}{u}:=\rho(u)\ \ (u\in U,\ \rho\in U^*).$

\begin{re}\label{re0}If $U_0$ is a $C$-submodule of a module $U\in\rmod{C}$, then the restriction map
$U^*\to U_0^*$, $\rho\mapsto\rho|U_0$ is surjective (by the injectivity of $Z$), and its kernel is $U_0^{\perp}$,
the annihilator of $U_0$ in $U^*$, hence $U_0^*=U^*/U_0^{\perp}$ (where the equality means in fact the natural isomorphism).
Further, if $W_0$ is a submodule of a module $W\in\lm{\com{C}}$, then clearly $(W/W_0)^*$ is naturally isomorphic to
$W_0^{\perp}$, the annihilator of $W_0$ in $W^*$, thus $(W/W_0)^*=W_0^{\perp}$. In particular, taking  
$W=U^*$ and $W_0=U_0^{\perp}$, we have 

$$U_0^{**}=(U^*/U_0^{\perp})^*=U_0^{\perp\perp}.$$
\end{re}

If $U,V\in\rmod{C}$ and  $a\in\lhom{C}{U}{V}$, the {\em adjoint} $a^*\in\lhom{\com{C}}{V^*}{U^*}$ is defined by 
$\inner{\rho}{a^*}=\rho\circ a$ ($\rho\in V^*$).
Similarly, for $b\in\lhom{\com{C}}{V^*}{U^*}$  we may define $b^*\in\lhom{\bcom{C}}{U^{**}}{V^{**}}$ by $\inner{b^*}{\theta}=
\theta\circ b$. For $a\in\lhom{C}{U}{V}$ we set $a^{**}=(a^*)^*$. Thus 
$a^{**}\in\lhom{\bcom{C}}{U^{**}}{V^{**}}$. 

\begin{lemma}\label{le301}(i) Let $a\in\lhom{C}{U}{V}$. Then $\ker a^{**}=(\im a^*)^{\perp}$
(the annihilator of $\im a^*$ in $U^{**}$). Since $Z$ is injective in $\rmod{C}$, we also have that 
$\im a^*=(\ker a)^{\perp}$, hence 
$\ker a^{**}=((\ker a)^{\perp})^{\perp}.$ Thus
$$\ker a^{**}=(\ker a)^{**}.$$

(ii) If $Z$ is injective also as a left $\com{C}$-module, then 
$$a(U)^{**}=a(U)^{\perp\perp}=a^{**}(U^{**}).$$

(iii) Since $Z$ is a cogenerator in $\rmod{C}$ and $\com{C}=\lend{C}{Z}$,  
$U$ is dense in $U^{**}$
in the following sense: for each $\theta\in U^{**}$ and each finite subset 
$\{\rho_1,\ldots,\rho_n\}$ of $U^*$ there exists  $u\in U$ such that $\rho_j\theta:=\rho_ju$
for all $j=1,\ldots n$. 
\end{lemma}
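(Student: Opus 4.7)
The plan is to treat each part in turn, using Remark \ref{re0} for the identifications of double annihilators with biduals, the injectivity of $Z$ for extending partial morphisms, and the cogenerator property of $Z$ for the density in (iii).

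For (i), I would first unwind the definitions: for $\theta\in U^{**}$ one has $a^{**}\theta=\theta\circ a^*$, so $a^{**}\theta=0$ iff $\theta$ vanishes on $\im a^*$, giving $\ker a^{**}=(\im a^*)^{\perp}$. The containment $\im a^*\subseteq(\ker a)^{\perp}$ is immediate since $(\rho\circ a)(\ker a)=0$. For the reverse, a $\sigma\in U^*$ that vanishes on $\ker a$ factors through $U/\ker a\cong\im a\hookrightarrow V$; injectivity of $Z$ in $\rmod{C}$ extends the induced map $\im a\to Z$ to some $\rho\in V^*$ with $\rho\circ a=\sigma$, so $\sigma=a^*\rho\in\im a^*$. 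Combining $\ker a^{**}=((\ker a)^{\perp})^{\perp}$ with Remark \ref{re0} applied to $U_0=\ker a$ yields $\ker a^{**}=(\ker a)^{**}$.

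For (ii), Remark \ref{re0} already delivers $a(U)^{**}=a(U)^{\perp\perp}$, so I only need $a(U)^{\perp\perp}=a^{**}(U^{**})$. The inclusion $\supseteq$ is automatic: for $\theta\in U^{**}$ and $\rho\in a(U)^{\perp}=\ker a^*$, $(a^{**}\theta)(\rho)=\theta(a^*\rho)=0$. For the reverse inclusion, given $\psi\in a(U)^{\perp\perp}$, the vanishing of $\psi$ on $\ker a^*$ factors $\psi$ through $V^*/\ker a^*\cong \im a^*\subseteq U^*$ as left $\com{C}$-modules. The added hypothesis that $Z$ is injective as a left $\com{C}$-module lets me extend the resulting morphism $\im a^*\to Z$ to some $\theta\in U^{**}=\lhom{\com{C}}{U^*}{Z}$, and a direct check on $V^*$ gives $a^{**}\theta=\psi$. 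This extension step is the only place where the extra injectivity hypothesis is used, and is the main obstacle in the lemma.

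For (iii), given $\theta\in U^{**}$ and $\rho_1,\ldots,\rho_n\in U^*$, I assemble the $C$-linear map $\rho=(\rho_1,\ldots,\rho_n)\colon U\to Z^n$ and set $z:=(\rho_1\theta,\ldots,\rho_n\theta)\in Z^n$; the goal is $z\in\rho(U)$. If not, the cogenerator property of $Z$ in $\rmod{C}$ produces a $C$-linear $\phi\colon Z^n\to Z$ with $\phi(\rho(U))=0$ but $\phi(z)\ne 0$. Writing $\phi$ componentwise as $(z_1,\ldots,z_n)\mapsto\sum t_j z_j$ with $t_j\in\lend{C}{Z}=\com{C}$, the vanishing on $\rho(U)$ translates to the relation $\sum t_j\rho_j=0$ in $U^*$. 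Applying the left $\com{C}$-linear map $\theta$ yields $\sum t_j(\rho_j\theta)=\phi(z)=0$, contradicting the choice of $\phi$. Hence $z\in\rho(U)$, delivering the required $u\in U$. Apart from the extension step in (ii), everything else is bookkeeping of the duality conventions together with the two standing properties of $Z$ (injectivity in $\rmod{C}$ and cogenerator).
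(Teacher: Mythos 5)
Your proposal is correct and follows essentially the same route as the paper: part (i) is the same routine unwinding plus Remark \ref{re0}, part (ii) uses exactly the paper's extension argument (factoring through $V^*/\ker a^*\cong\im a^*$ and extending via the left $\com{C}$-injectivity of $Z$, which the paper phrases as defining $\sigma$ on $\im a^*$ and extending), and part (iii) is the paper's cogenerator argument verbatim. No gaps.
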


\begin{proof} The proof of (i) is a routine verification (where Remark \ref{re0} is used to prove the last formula in (i)).

(ii) The first equality is just a special case of the last formula in Remark \ref{re0}. For the second equality, first note that
$a^{**}(U^{**})=(\ker a^*)^{\perp}$. Indeed, the inclusion $\subseteq$ is obvious and for the reverse inclusion, given $\theta\in(\ker a^*)^{\perp}$, 
we need to find $\sigma\in U^{**}$ such that $\sigma\circ a^*=\theta$. Since $\theta$ annihilates $\ker a^*$, we may 
define $\sigma$ on the
range of $a^*$ by $\inner{\inner{\rho}{a^*}}{\sigma}:=\inner{\rho}{\theta}$ ($\rho\in V^*$) and then extend it by 
$\com{C}$-injectivity of $Z$
to $\sigma\in\lhom{\com{C}}{U^*}{Z}=U^{**}$. Finally, using the obvious identity $\ker a^*=(aU)^{\perp}$, we have
$a^{**}(U^{**})=(\ker a^*)^{\perp}=(aU)^{\perp\perp}$.

(iii) Let $Y:=\{(\rho_1x,\ldots,\rho_nx):\, x\in U\}$,
and  $z=(\rho_1\theta,\ldots,\rho_n\theta)$. We must show that $z\in Y$.
If $z\notin Y$, then  (since $Z$ is a cogenerator in $\rmod{C}$) there exists $t\in\lhom{C}{Z^n}{Z}$ 
such that $tY=0$ and $tz\ne0$.
But the fact that $t$ is just a $n$-tuple of maps $t_j\in\com{C}$ leads to
$\sum_{j=1}^nt_j\rho_j=0$ and (since $t_j\in\com{C}$ and $\theta$ is a homomorphism of $\com{C}$-modules) to
$(\sum_{j=1}^nt_j\rho_j)\theta=tz\ne0$, a contradiction.)
\end{proof}

A module $U\in\lm{R}$ is {\em linearly compact} if for any family of submodules $U_i$ of $U$ and elements $u_i\in U$
the system of relations $x-u_i\in U_i$ has a solution $x$ provided that every finite subsystem has a solution.

{\em By a $C$-algebra we mean a ring $R$ containing $C$ as a subring in its center so that $C$ and $R$ have  
the same identity $1$. (In particular $C$ is commutative.)}

If $R$ is a $C$-algebra and $U\in\lm{R}$, then $U$ is also a $C$-module, hence $U^*=\lhom{C}{U}{Z}$ 
is defined. If $V\in\lm{R}$, then each
$f\in\lhom{R}{U}{V}$ is also a homomorphism of  $C$-modules, hence so is its adjoint $f^*:V^*\to U^*$. 
Note also that
$U^*$ (and $V^*$) is a right $R$-module by $\rho r:=\rho\circ \lambda_r$, where $\lambda_r$ is the left multiplication
by $r\in R$ on $U$. Moreover,  $U^*$ is a $\com{C},R$-bimodule. Consequently 
$U^{**}$ is a left 
$R$-module by $(r\theta)(\rho):=\theta(\rho r)$ ($\theta\in U^{**}$, $\rho\in U^*$, $r\in R$), and the natural map 
$U\to U^{**}$ is a homomorphism of
left $R$-modules.  The adjoint $f^*$ of each homomorphism $f\in\lhom{R}{U}{V}$ is a homomorphism of right $R$-modules and
also a homomorphism of  $\com{C}$-modules. Since $C$ is contained in the center of $R$, elements of $C$ act as $R$-endomorphisms of $U$, 
hence $\biend{R}{U}\subseteq\lend{C}{U}$ and so for each $b\in\biend{R}{U}$ the adjoint $b^*$ is defined.

\section{Biendomorphisms of torsionless  modules}

Recall that if $U$ is an $R,C$-bimodule which is flat as a left $R$-module and $Z\in\rmod{C}$ is injective,  then
$U^*=\lhom{C}{U}{Z}$ is injective in $\rmod{R}$ \cite[3.5]{La}, where the right action of $R$ on $U^*$ is defined by
$\inner{u}{\rho r}=\inner{ru}{\rho}$ ($u\in U$, $r\in R$, $\rho\in U^*$). 

Observe that a subset $G\subseteq U^*$ is {\em separating for $R$} (that is, the map $R\mapsto (U^*)^G$, $r\mapsto(\rho r)_{\rho\in G}$, is injective)
iff the inclusion $rU\subseteq\bigcap_{\rho\in G}\ker\rho$ for $r\in R$ implies that $r=0$. Note also that if $U$ is 
faithful as an $R$-module, so is $U^*=\lhom{C}{U}{Z}\in\rmod{R}$ since $Z$ is a cogenerator in $\rmod{C}$, thus 
$U^*$ has a separating subset $G$ for $R$ (for example, $G=U^*$).

\begin{theorem}\label{th3}Let $U$ be an $R,C$-bimodule which is faithful as a left $R$-module,
$Z$ an injective cogenerator in $\rmod{C}$ and $U^*=\lhom{C}{U}{Z}$. 

(i) If $U\in\lm{R}$ is torsionless and $T$-accessible (that is, $TU=U$, where $T$ is the trace ideal of $U$ in $R$), 
then 
\begin{equation}\biend{R}{U}\subseteq\biend{R}{U^*}.\end{equation}

(ii) If $U\in\lm{R}$ is flat and $G\subseteq U^*$ is a separating subset for $R$, then  
\begin{equation}\label{8}\biend{R}{(U^*)^G}\subseteq\biend{R}{U^{(G)}}\ \cong(\biend{R}{U}.\end{equation}
Thus if   $U^*$ has a finite separating subset for $R$ (which is always the case if $R$ is right artinian), then
\begin{equation}\label{2}\biend{R}{U^*}\subseteq\biend{R}{U}.\end{equation}
\end{theorem}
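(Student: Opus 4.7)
For part (i), I would start by observing that, by Lemma \ref{le12}, $B := \biend{R}{U}$ is canonically identified with the subring $\{q \in \qr : qT \subseteq T\}$ of the maximal right ring of quotients. The plan is to show that each such $q$ induces a biendomorphism of $U^*$ via the adjoint-type formula $(\rho \tilde{q})(u) := \rho(qu)$ (acting from the right on $U^*$, consistent with the reverse-composition convention in $\tilde{B}$). I would first check that $\rho \tilde{q}$ is $C$-linear in $u$ --- using that right $C$-multiplications on $U$ are $R$-endomorphisms, hence commute with $q$ --- and that $q \mapsto \tilde{q}$ is a ring homomorphism restricting to the natural embedding of $R$. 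The main technical step is verifying that $\tilde{q}$ commutes with every $\psi \in \lend{R}{U^*}$; this is nontrivial precisely because $\lend{R}{U^*}$ typically contains maps that are not adjoints of $R$-endomorphisms of $U$. Using $T$-accessibility to write $u = \sum t_i v_i$ with $t_i \in T \subseteq R$, one obtains
$$(\rho \tilde{q})(u) = \sum \rho((q t_i) v_i) = \sum (\rho \cdot q t_i)(v_i),$$
expressing $\rho \tilde{q}$ purely via the right $R$-action on $\rho$; this is the structure that $\psi$ respects by right-$R$-linearity. Torsionlessness of $U$ is then invoked to detect equality in $U$ via pairing with elements of $\lhom{R}{U}{R}$.

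For part (ii), the plan is structural: since $U$ is left-flat and $Z$ is right-injective, $U^*$ is right-injective by \cite[3.5]{La}, hence $(U^*)^G$ is injective as a direct product of injectives. The separating hypothesis provides an embedding $R \hookrightarrow (U^*)^G$ via $r \mapsto (\rho r)_{\rho \in G}$, placing us in the setup of Corollary \ref{th2}: we obtain $\biend{R}{(U^*)^G} \cong Q_{(U^*)^G}$ as a subring of $\qr$. The target $\biend{R}{U^{(G)}} \cong \biend{R}{U}$ by the standard diagonal Morita identification. The inclusion (\ref{8}) then becomes a subring containment inside $\qr$, which I would verify by considering the natural embedding $U^{(G)} \hookrightarrow ((U^*)^G)^*$ and, for $q \in Q_{(U^*)^G}$, studying the induced dual action $(q \theta)(\rho) := \theta(\rho q)$ on $((U^*)^G)^*$; the density-type result in Lemma \ref{le301}(iii) is the tool to show this action preserves $U^{(G)}$ and induces a biendomorphism there. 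The final inclusion (\ref{2}) then follows by specializing to finite $G$, where $(U^*)^G = (U^*)^{(G)}$ and $U^{(G)} = U^G$, and applying the elementary identity $\biend{R}{X^n} \cong \biend{R}{X}$ used already in Corollary \ref{th2}; the parenthetical remark about right artinian rings is a straightforward descending chain argument on annihilators in $R$ of finite subsets of $U^*$.

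The main obstacle in both parts is transferring the biendomorphism property across the duality between $U$ and $U^*$ when the commutant on the dual side is typically richer than the adjoints of the original commutant. In part (i), the $T$-accessibility and torsionless hypotheses are exactly what provide the ``coordinatization'' via $T \subseteq R$ that reduces the commutation with arbitrary $\psi \in \lend{R}{U^*}$ to a check involving only the right $R$-action. In part (ii), the analogous obstacle is showing that the abstract $q \in Q_{(U^*)^G}$ --- defined purely by an annihilator condition in $\qr$ --- actually acts biendomorphically on $U^{(G)}$; flatness is the essential ingredient because it is what makes $(U^*)^G$ injective and thereby unlocks Corollary \ref{th2}, while the need for the exponent $G$ reflects the fact that a single $U^*$ may not be ``large enough'' on its own to capture all of the $\qr$-level structure.
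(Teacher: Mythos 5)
Your part (i) is essentially the paper's argument: identify $B=\biend{R}{U}$ with $\{q\in\qr:\,qT\subseteq T\}$ via Lemma \ref{le12}, then use a $T$-coordinatization $u=\sum t_iv_i$ to reduce commutation with an arbitrary $g\in\lend{R}{U^*}$ to right $R$-linearity (the paper phrases this with $D_q=q^{-1}R\supseteq T$ and the fact that the difference $g(\rho)\circ q-g(\rho\circ q)$ annihilates $D_qU=U$, but the substance is the same). Your remark that torsionlessness is used "to detect equality via pairing" is slightly misplaced --- it is consumed inside Lemma \ref{le12}, not in the commutation check --- but that is harmless.

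Part (ii) has the right frame (flatness gives injectivity of $(U^*)^G$, the separating set embeds $R$, Corollary \ref{th2} identifies $\biend{R}{(U^*)^G}$ with $Q_{(U^*)^G}\subseteq\qr$, finite $G$ and the artinian remark are handled correctly), but the central step is not justified. You propose to let $q\in Q:=Q_{(U^*)^G}$ act on $W:=U^{(G)}\subseteq W^{**}$ by the dual action and to use the density of $W$ in $W^{**}$ (Lemma \ref{le301}(iii)) to show this action preserves $W$. Density cannot do this: it says only that any $\theta\in W^{**}$ agrees with some element of $W$ on each \emph{finite} subset of $W^*$, which is true of every $\theta$ and hence gives no invariance information. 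What is actually needed is Lemma \ref{le2}(ii): for $D_q:=q^{-1}R$ one has $\ann{(U^*)^G}{D_q}=0$, and since $Z$ is a cogenerator this forces $D_qW=W$. Only then can one write $w=\sum a_jw_j$ with $a_j\in D_q$, so that $qa_j\in R$ and the candidate action $q\cdot w:=\sum(qa_j)w_j$ makes sense; one must still check well-definedness (the paper does this via $\im b^*=Vb=(Vq)a\subseteq\im a^*$ with $b=qa$, so that $\ker a\subseteq\ker b$; alternatively, your double-dual picture makes well-definedness automatic, since $\inner{\rho q}{w}=\sum\inner{\rho(qa_j)}{w_j}=\inner{\rho}{\sum(qa_j)w_j}$ identifies the dual action of $q$ on $\iota(w)$ with $\iota(\sum(qa_j)w_j)$ independently of the decomposition --- but this still requires $D_qW=W$ as input). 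Without Lemma \ref{le2}(ii) and the cogenerator argument yielding $D_qW=W$, the passage from $q\in Q$ to an element of $\biend{R}{U^{(G)}}$ does not go through.
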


\begin{proof} First note that since each $c\in C$ acts as an $R$-endomorphism of $U$, $c$ commutes with every
$q\in B:=\biend{R}{U}$, hence $q\in\lend{C}{U}$. Therefore $q^*$ is defined and $\rho\circ q\in U^*$ for each
$\rho\in U^*$. 

(i) If $U$ is torsionless and $TU=U$, then we will show that $q^*\in\biend{R}{U^*}$ for each $q\in B$; in other words, 
that $q^*\circ g=g\circ q^*$ for all
$g\in\lend{R}{U^*}$, which means that $g(\rho)\circ q=g(\rho\circ q)$ for all $\rho\in U^*$. By Lemma \ref{le12} 
$B$ is a subring of $\qr$. For each $q\in B$ let $D_q:=q^{-1}R=\{r\in R:\, qr\in R\}$.
As usual, we regard $R$ as a subring in $B$ by identifying each $r\in R$ with the left multiplication $\lambda_r$ on $U$.
Then $(\rho\circ q)r=\rho\circ(qr)$ for all $q\in B$, $\rho\in U^*$ and $r\in R$ (this is just the associativity
$(\rho\circ q)\circ \lambda_r=\rho\circ(q\circ \lambda_r)$). Applying this to $g(\rho)$
instead of $\rho$, we obtain that $(g(\rho)\circ q)r=g(\rho)\circ(qr)$. If $r\in D_q$, then $s:=qr\in R$ and,
since $g$ is an $R$-endomorphism,  we have now $(g(\rho)\circ q-g(\rho\circ q))r=g(\rho)\circ(qr)-g((\rho\circ q)r)
=g(\rho)\circ(qr)-g(\rho\circ(qr))
=g(\rho) s-g(\rho s)=0$. 
Thus $\omega:=g(\rho)\circ q-g(\rho\circ q)\in U^*$ satisfies $\omega D_q=0$, that is $\omega(D_qU)=0$. Since
$D_qU=U$ by Lemma \ref{le12}, $\omega=0$, hence $g(\rho)\circ q=g(\rho\circ q)$, which means that $q^*\in\biend{R}{U^*}$. 

(ii) Denote  $W:=U^{(G)}$ and $V:=(U^*)^G=W^*$.  By
Corollary \ref{th2} $\biend{R}{V}$ can be identified with the subring $Q:=Q_V$ of $\qr$. So, to prove 
(\ref{8}), we will define a left $Q$-module structure on $W$ (and on $U$) so that for each $q\in Q$ the right 
multiplication $\varrho_q$ by $q$ on $V$ will be just the adjoint operator of the left multiplication
$\lambda_q$ by $q$ on $W$. Assuming this, and noting that  for each $f\in\lend{R}{W}$ the map $f^*$ is in $\lend{R}{V}$,
hence $f^*$ commutes with $\lambda_q^*=\varrho_q\in\biend{R}{V}$, we will have that $\lambda_qf=f\lambda_q$, which 
means that $\lambda_q\in\biend{R}{W}$. Since the map $q\mapsto \lambda_q$
is injective (for $\lambda_q=0$ only if $\varrho_q=\lambda_q^*=0$,
hence $q=0$), this will show that $Q$ can be regarded as a subset of $\biend{R}{W}$ and consequently (\ref{8}) will follow.

Note that by 
Lemma \ref{le2}(ii) for each $q\in Q$ the annihilator in $V$ of  $D_q:=q^{-1}V$ is $0$. This means that 
$D_qW=W$, for otherwise $\inner{D_qW}{v}=0$ for some
nonzero $v\in V$ (since $Z$ is a cogenerator in $\rmod{C}$), which would mean that $\inner{W}{vD_q}=0$, hence that 
$v\in\ann{V}{D_q}$ and therefore $\ann{V}{D_q}$ would not be $0$.

Since $W=D_qW$, we may now  define a $Q$-module structure on $W$ by
\begin{equation}\label{41}q(\sumjn a_jw_j):=\sumjn(qa_j)w_j\ \ (a_j\in D_q,\ w_j\in W).\end{equation}
To show that this is well defined, assume that $\sumjn a_jw_j=0$.
Then the vector $w:=(w_1,\ldots,w_n)\in W^n$ is in the kernel of the operator
$a:W^n\to W$ defined by $a(y_1,\ldots,y_n)=\sumjn a_jy_j$. But $\ker a=(\im a^*)_{\perp}$, so $w\in(\im a^*)_{\perp}$. 
Since $a_j\in D_q$, $b_j:=qa_j\in R$, so the right multiplication by $b_j$ on $V$
is just the adjoint operator of the left multiplication by $b_j$ on $W$ and similarly for $a_j$. So we consider now the
operators $a:=(a_1,\ldots,a_n)$ and $b:=(b_1,\ldots,b_n)$ from $W^n$ to $W$ with the adjoints
$a^*=[a_1^*,\ldots,a_n^*]:V\to V^n$ and $b^*=[b_1,\ldots,b_n]:V\to V^n$.
By definition of adjoints we have that $va:=(va_1,\ldots,va_n)=(\inner{v}{a_1^*},\ldots,\inner{v}{a_n^*})=\inner{v}{a^*}$ 
for all $v\in V$,
hence $Va=\im {a^*}$ and similarly $Vb=\im{b^*}$. Since
$b=qa$, it follows that
$$\im b^*=Vb=V(qa)=(Vq)a=\inner{Vq}{a^*}\subseteq\im a^*.$$
Since $w\in(\im a^*)_{\perp}$, it follows now that $w\in(\im b^*)_{\perp}=\ker b$,
which means that $\sumjn(qa_j)w_j=bw=0$. This proves that (\ref{41}) is well defined. It can be verified
routinely (using Lemma \ref{le2}(ii)) that in this way $W$ becomes a left $Q$ module such that $V$ is its dual module. 
Further, it follows from
(\ref{41}) that $q$ preserves all the summands $U$ in the decomposition $W=U^{(G)}$,
hence $U$ is also a $Q$-module and $W=U^{(G)}$ as left $Q$-modules.

If $U^*$ has a finite separating set $G$ (\ref{2}) follows from (\ref{8}), since for a finite $G$ we have $\biend{R}{U}\cong\biend{R}{U^G}$ and 
$\biend{R}{U^*}\cong\biend{R}{(U^*)^G}$. If $U^*$ does not have any finite separating subset for $R$, then for any
$\rho_1\in U^*$ the annihilator $\ann{R}{\rho_1}$ is nonzero (since $\{\rho_1\}$ is not separating for $R$). Since $U^*$ is 
faithful there exists $\rho_2\in U^*$ such that $\rho_2\ann{R}{\rho_1}\ne0$, hence $\ann{R}{\rho_1,\rho_2}\subset
\ann{R}{\rho_1}$ (where the inclusion is strict). Continuing in this way, we can find a strictly decreasing sequence
of right ideals $\ann{R}{\rho_1}\supset\ann{R}{\rho_1,\rho_2}\supset\ldots$. Thus, if $R$ is right artinian, $U^*$ must
have a finite separating subset for $R$.
\end{proof}

The set $G$ in Theorem \ref{th3} is not redundant, that is, the inclusion (\ref{2})
does not always hold even if $U$ is projective and faithful. For example, if $U$ is an infinite
dimensional vector space over a field $C=\f$, $Z=\f$ and $R=\lend{\f}{U}$, then $\biend{R}{U^*}=\lend{\f}{U^*}$ contains also
operators which are not adjoint to any operator in $R=\biend{R}{U}$.

If $R$ is a $C$-algebra, the module $R^*$ plays a special role. We will now observe that if $R^*$ has a finite separating subset for $R$,
then the same holds for many modules $U^*$. Suppose that besides $R^*$ also $U$ has a finite separating subset for $R$.
This means that there exist monomorphisms $\mu:R\to U^m$  and $\nu:R\to (R^*)^n$ of $R$-modules 
(left and right, respectively) for some
$m,n\in\bn$. Then the direct sum of $n$ copies of $\mu^*$ is an epimorphism $(U^*)^{mn}\to (R^*)^n$ of right $R$-modules, 
and we may
lift $\nu(1)$ to an element $\omega$ of $(U^*)^{mn}$, which defines a monomorphism $R\to (U^*)^{mn}$, $r\mapsto\omega r$
of right $R$-modules.
So in this case $U^*$ has a finite separating subset for $R$ and therefore (\ref{2}) holds.
In particular, if $R$ is a domain, then every nonzero element in a flat $R$-module $U$ is separating since flat modules are torsion-free
\cite[4.18]{La}, so in this case we have the above monomorphism $\mu$ (with $m=1$).
Note also that a generator $U\in\lm{R}$ has a finite separating subset. (Namely, a generator means that for some $m\in\bn$ there 
exists an epimorphism $U^m\to R$,  hence also a monomorphism from $R$ to $U^m$ since $1\in R$ can be lifted to an element 
of $U^m$.) 
If $U$ is torsionless and $TU=U$ (for example, $U=R$), then equality holds in (\ref{2}) by Theorem  \ref{th3}. 
Thus we have the following corollary.

\begin{co}\label{co31}Let $R$ be a $C$-algebra. Suppose that $U\in\lm{R}$ is flat and $R^*$ has a finite separating subset for $R$. 

(i) If $U$ contains a
finite separating subset for $R$ (in particular, if $R$ is a domain or, if $U$ is a generator in $\lm{R}$), then the inclusion 
(\ref{2}) holds. 

(ii)  $\biend{R}{R^*}=R$. 
\end{co}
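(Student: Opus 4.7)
The plan is to reduce part (i) to Theorem \ref{th3}(ii), whose hypotheses require $U$ to be flat (given) and $U^*$ to possess a finite separating subset for $R$. Once I produce such a subset, (\ref{2}) follows at once. So the heart of the argument is the construction of this subset out of the two monomorphisms supplied by the hypotheses; the two special cases in (i) then amount to producing the auxiliary monomorphism for $U$, and (ii) is obtained by specializing $U$ to $R$.

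For the construction, let $\nu\colon R\to (R^*)^n$ be a monomorphism witnessing the hypothesis on $R^*$, and $\mu\colon R\to U^m$ one witnessing that $U$ has a finite separating subset. Since $Z$ is an injective cogenerator in $\rmod{C}$, dualization is exact, so $\mu^*\colon (U^*)^m\cong (U^m)^*\to R^*$ is surjective, and so is its $n$-fold direct sum $(U^*)^{mn}\to (R^*)^n$. I would then lift $\nu(1)\in (R^*)^n$ to some $\omega\in (U^*)^{mn}$. The map $r\mapsto \omega r$ from $R$ to $(U^*)^{mn}$, composed with this surjection, sends $r$ to $\nu(1)r=\nu(r)$, which is injective; hence so is $r\mapsto \omega r$. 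Thus $\omega$ provides a finite separating subset for $U^*$, and Theorem \ref{th3}(ii) yields (\ref{2}).

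For the two special cases in (i), only $\mu$ needs to be produced. If $R$ is a domain then flatness implies that $U$ is torsion-free, so any nonzero $u\in U$ gives a monomorphism $R\to U$ via $r\mapsto ru$ (with $m=1$). If $U$ is a generator, by definition an epimorphism $U^m\to R$ exists for some $m$, and lifting $1\in R$ through it yields an element whose orbit map $R\to U^m$ is a monomorphism.

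For (ii), I would specialize (i) to $U=R$: the module $R$ is free, hence flat, and $\{1\}$ is trivially a finite separating subset. Thus $\biend{R}{R^*}\subseteq \biend{R}{R}$ by (\ref{2}), and Corollary \ref{co121} gives $\biend{R}{R}=R$ since $R$ is a torsionless generator in $\lm{R}$. The reverse inclusion $R\subseteq \biend{R}{R^*}$ is the standing right-multiplication identification from Section~2, so $\biend{R}{R^*}=R$. The only step requiring care in the whole argument is the surjectivity of $\mu^*$ and the lifting of $\nu(1)$, both of which are immediate from the injectivity of $Z$, so I do not foresee any substantive obstacle.
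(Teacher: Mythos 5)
Your proposal is correct and follows essentially the same route as the paper: the paper's proof is precisely the construction of a finite separating subset of $U^*$ by dualizing $\mu$, lifting $\nu(1)$ through the resulting epimorphism $(U^*)^{mn}\to(R^*)^n$, and then invoking Theorem \ref{th3}(ii), with the domain and generator cases handled exactly as you do. The only cosmetic difference is in (ii), where the paper cites Theorem \ref{th3} for equality in (\ref{2}) directly (using that $R$ is torsionless with $TR=R$), while you obtain the reverse inclusion from the standing identification of $R$ inside $\biend{R}{R^*}$ together with Corollary \ref{co121}; both are valid and equivalent.
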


Suppose that $Z=C$ and $C$ is a field $\f$. It turns out that the class of $\f$-algebras $R$ such that $R^*$ has a 
separating functional $\omega$ for $R$ (so that Corollary \ref{co31} applies) includes  all primitive 
algebras with nonzero socle,  von Neumann algebras (where $\f=\mathbb{C}$)
and various function algebras, such as polynomial algebras $R=\f[x_1,\ldots,x_n]$ over 
fields $\f\in\{\mathbb{C},\mathbb{R}\}$. 
On the other hand, if $R=\f\oplus R_0$,  where $R_0$ is an infinite dimensional vector space over $\f$,  and the multiplication
on $R$ is defined so that $R_0R_0=0$ and $1\in\f$ acts as the identity in $R$, then it can be proved that $R^*$ does not 
admit any finite separating subset for $R$. (In fact, it turns out that in this example 
$\biend{R}{R^*}\cong R^{**}\ne R$. We will consider a more general situation in the last section.)

\medskip
{\em Problem.} Is the inclusion ({\ref{2}) true for all $R$-modules if $R$ is, say, a commutative noetherian 
algebra over a field $\f$ and $Z=\f$?

\section{Biendomorphisms of certain modules admitting torsion}

In this section we will prove the inclusion 
$\biend{R}{U}\subseteq\biend{R}{U^*}$
for a large class of (not necessarily flat) modules over $C$-algebras.

Recall that a  {\em finitely related} module is a quotient of a free
module by a finitely generated  submodule.

\begin{de}A left $R$-module $U$ has the {\em FRCP (finitely related complementation property)}
if $U$ is a direct summand in a left $R$-module $W$ with the property 
that each finite subset of $W$ is contained in a finitely related direct summand  
of $W$. 
\end{de}

All finitely related and all projective modules have the FRCP. It can be proved (but not needed here) that
if $R$ is left and right noetherian then $R^{\bn}$ 
has the FRCP.  (Note that $R^{\bn}$ is not projective if, for example, $R=\bz$ \cite[p. 22]{La}.)

To each $g\in\lhom{R}{V^*}{U^*}$ we can associate  the formal adjoint map $g^*|U$,   but
to assure that its range is in $V^{**}$, we need that $g$ is a homomorphism of left $\com{C}$-modules (this is automatic if 
$\com{C}=C$, since
$C\subseteq R$). So, let $\ltr$ denote the set of all $\com{C},R$-bimodule homomorphisms from $V^*$ to $U^*$. Then 
each $g\in\ltr$ has an adjoint $g^*\in\lhom{R}{U^{**}}{V^{**}}$. It is convenient  that the natural homomorphism 
of $C$-modules
\begin{equation}\label{thom}\ltr\to\lhom{R}{U}{V^{**}},\ \ g\mapsto g^*|U\end{equation}
turns out to be an isomorphism. Indeed, the inverse map sends  $h\in \lhom{R}{U}{V^{**}}$ to $h^*|V^*$, where $V^*$ is regarded
as a submodule of $V^{***}$. 
We  need the following generalization of \cite[Theorem 2.2]{M}.

\begin{theorem}\label{th1}Let $R$ be a $C$-algebra,  $U, V\in\lm{R}$, let $Z\in\rmod{C}$ be an injective cogenerator 
and let $U^*=\lhom{C}{U}{Z}$ and $V^*=\lhom{C}{V}{Z}$. If $U$ has the FRCP 
then each $g\in\ltr$ can be approximated by adjoints of elements of $\lhom{R}{U}{V}$ in the following sense:
for all finite subsets $G$ of $U$ and $H$ of $V^*$ there exists $f\in\lhom{R}{U}{V}$ such that
$$\inner{u}{g\rho}=\inner{uf}{\rho}\ \ \mbox{for all}\ \rho\in H\ \mbox{and}\ u\in G.$$
\end{theorem}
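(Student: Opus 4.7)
The plan is to translate the approximation problem via the isomorphism (\ref{thom}) into a density statement, and then reduce via the FRCP to the finitely related case, where the density will follow from Lemma \ref{le301}(iii). Under (\ref{thom}), identify $g \in \ltr$ with $h := g^*|_U \in \lhom{R}{U}{V^{**}}$; then $\inner{u}{g\rho} = h(u)(\rho)$ and $\inner{uf}{\rho} = \rho(f(u))$, so what must be shown is that the natural map $\lhom{R}{U}{V} \to \lhom{R}{U}{V^{**}}$ obtained by composition with $\iota: V \to V^{**}$ has dense image in the finite topology whose basic neighborhoods are determined by pairs in $G \times H$.

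For the reduction, let $W = U \oplus U'$ be the ambient module from the FRCP. Extend $h$ to $\tilde h: W \to V^{**}$ by zero on $U'$; this is $R$-linear. Choose a finitely related direct summand $U_0$ of $W$ containing the finite set $G \subseteq U \subseteq W$, write $W = U_0 \oplus U_0''$, and apply the theorem (to be proved for finitely related modules) to $\tilde h|_{U_0}$ to obtain $f_0 \in \lhom{R}{U_0}{V}$ with $\rho(f_0(u)) = h(u)(\rho)$ for $u \in G, \rho \in H$. Extend $f_0$ to $\tilde f_0: W \to V$ by zero on $U_0''$ and set $f := \tilde f_0|_U$; since every $u \in G$ lies in $U \cap U_0$, both decompositions of $W$ send $u$ to itself in the relevant summand, so $f(u) = f_0(u)$ and the approximation carries over.

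For the finitely related case write $U = R^{(n)}/N$ with $N = Rx_1 + \cdots + Rx_m$ and $x_j = \sum_i r_{j,i} e_i$, and let $\phi: V^n \to V^m$ be the $C$-linear map $(v_i)_i \mapsto (\sum_i r_{j,i} v_i)_j$. Evaluation at the generators $\bar e_i$ identifies $\lhom{R}{U}{V}$ with $\ker\phi \subseteq V^n$ and $\lhom{R}{U}{V^{**}}$ with $\ker\phi^{**} \subseteq (V^n)^{**} \cong (V^{**})^n$. The crucial step is the natural identification
\[
(\ker\phi)^{**} = \ker\phi^{**}
\]
inside $(V^n)^{**}$: applying the exact functor $\lhom{C}{-}{Z}$ to $0 \to \ker\phi \to V^n \xrightarrow{\phi} V^m$ yields the short exact sequence $0 \to \phi^*(V^{m*}) \to V^{n*} \to (\ker\phi)^* \to 0$, and applying the left-exact $\lhom{\com{C}}{-}{Z}$ produces the exact $0 \to (\ker\phi)^{**} \to (V^n)^{**} \to \phi^*(V^{m*})^*$, whose last arrow sends $\theta$ to the restriction $\sigma \mapsto \theta(\phi^*\sigma) = (\phi^{**}\theta)(\sigma)$; vanishing of this restriction is equivalent to $\phi^{**}\theta = 0$.

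Once the identification is established, Lemma \ref{le301}(iii) applied to the $C$-module $\ker\phi$ yields density of $\ker\phi$ in $\ker\phi^{**}$ in the finite topology. Writing each $u \in G$ as $\sum_i a_{u,i}\bar e_i$, the desired condition $h(u)(\rho) = \rho(f(u))$ becomes $\sum_i \theta_i(\rho a_{u,i}) = \sum_i (\rho a_{u,i})(v_i)$, which is precisely evaluation of $\theta \in \ker\phi^{**}$ and $v := (f(\bar e_i))_i \in \ker\phi$ at the functional $(w_i)_i \mapsto \sum_i (\rho a_{u,i})(w_i)$ in $(V^n)^*$; applying density to this finite collection of functionals produces the required $f$. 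I expect the main obstacle to be the double-dualization identification $(\ker\phi)^{**} = \ker\phi^{**}$ and the verification that the finite topology on $(\ker\phi)^{**}$ coming from $(\ker\phi)^*$ is compatible with that on $\ker\phi^{**}$ coming from $V^{n*}$, which ultimately rests on the surjectivity of $V^{n*} \to (\ker\phi)^*$ granted by injectivity of $Z$.
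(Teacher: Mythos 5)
Your proposal follows essentially the same route as the paper's proof: translate the statement via the isomorphism (\ref{thom}) into a density assertion, settle the finitely related case by identifying $\lhom{R}{U}{V}$ and $\lhom{R}{U}{V^{**}}$ with a kernel and its bidual and then invoking Lemma \ref{le301}(i) and (iii), and reduce the general case to a finitely related summand via the FRCP using projections (your $\tilde h|_{U_0}$ and $f_0\circ(q|U)$ are exactly the paper's $i^*\circ p^*\circ g$ and $f_0\circ(q|U)$). One point needs adjustment: a finitely related module is $R^{(I)}/A$ with $A$ finitely generated but $I$ possibly infinite, so the identifications $(V^n)^{**}\cong(V^{**})^n$ and $(\ker\phi)^{**}=\ker\phi^{**}$ as you state them require a finite index set; the remedy is to first cut down to the finitely many coordinates of $I$ that occur in the generators of $A$ and in the representatives of the elements of $G$, as the paper does by splitting $\ann{V^I}{A}=\ker r\times V^{I\setminus n}$ for a finite subset $n\subseteq I$, after which your argument goes through verbatim.
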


\begin{proof}First consider the case when $U$ is finitely related, that is, $U=R^{(I)}/A$ for some  $I$ and a finitely generated submodule $A$ of 
$R^{(I)}$. (Here $R^{(I)}\subseteq R^{I}$ consists of elements which have only finitely many nonzero components.)
Let $\{r_1,\ldots,r_m\}$ be a set of generators of $A$. Using the standard basis $(e_i)_{i\in I}$ of $R^{(I)}$, $\lhom{R}{R^{(I)}}{V}$
can be naturally identified with $V^I$ (by identifying each $f\in\lhom{R}{R^{(I)}}{V}$ with $(e_if)_{i\in I}\in V^I$) and consequently  
$$\lhom{R}{U}{V}=\{f\in\lhom{R}{R^{(I)}}{V}:\, Af=0\}=\ann{V^I}{A}.$$
Similarly, using the natural isomorphism (\ref{thom}) 
we have that
$$\ltr=\ann{(V^{**})^I}{A}.$$
The space $(V^{**})^I$ is the dual of $(V^*)^{(I)}$ and it can be verified that under the above identifications the
theorem translates to the following statement: {\em given $\theta=(\theta_j)\in(V^{**})^I$ which annihilates $A$, for each finite subset $H_0$ of 
$(V^*)^{(I)}$ there exists $v=(v_j)\in V^I$ annihilating $A$ such that 
$\inner{v}{\rho}=\inner{\theta}{\rho}$ for all $\rho\in H_0$.} 
Denoting by $r_{i,j}$ the components of the generators $r_i$ of $A$, an element $v=(v_j)\in V^I$ (respectively
an element $\theta=(\theta_j)\in (V^{**})^I$) is in
$\ann{V^I}{A}$ (resp. in $\ann{(V^{**})^I}{A}$) if and only if
$$\sum_{j\in I}r_{i,j}v_j=0\ \ (\mbox{resp.} \sum_{j\in I}r_{i,j}\theta_j=0)\ \ \mbox{for all}\ i=1,\ldots m.$$
Since all $\rho\in H_0$ and all $r_i$ have only finitely many non-zero components, there exists a finite subset $n$ 
of $I$ such that $r_{i,j}=0$ and $\rho_j=0$ for all $i\in\{1,\ldots,m\}$ and all $\rho\in H_0$, if $j\in I\setminus n$. Let $r:V^n\to V^m$ be the homomorphism of $C$-modules
defined by $(r((v_j)))_i=\sum_{j\in n}r_{i,j}v_j$. Then $r^{**}:(V^{**})^n\to (V^{**})^m$ is given by
$(r^{**}((\theta_j)))_i=\sum_{j\in n}{r_{i,j}^{**}}\theta_j$ and it follows that
\begin{equation}\label{02}\ann{V^I}{A}=\ker r\times V^{I\setminus n}\ \ \mbox{and}\ \ 
\ann{(V^{**})^I}{A}=\ker r^{**}\times(V^{**})^{I\setminus n}.\end{equation}
By Lemma \ref{le301}(iii) $V^{k}$ is dense in $(V^{k})^{**}=(V^{**})^k$ for each finite $k$ and then by Lemma 
\ref{le301}(i) $\ker r$
is dense in $\ker r^{**}$. Since all the components of elements of $H_0\subseteq (V^*)^{(I)}$ are zero outside
of $n$, we see now from (\ref{02}) that $\ann{V^I}{A}$ is dense in $\ann{(V^{**})^{I}}{A}$ in the appropriate sense, which proves the
theorem for finitely related modules.

In general, let $U\in \lm{R}$ be a direct summand in $W\in\lm{R}$, where each finite subset of $W$ is
contained in a finitely related complemented $R$-submodule of $W$, and let $p:W\to U$ be a
projection, so that $p^*:U^*\to W^*$ is a monomorphism of right $R$-modules and also a homomorphism of left 
$\com{C}$-modules. Given
finite subsets $G\subseteq U$ and $H\subseteq V^*$ and $g\in\ltr$,
we consider the composition $i^*\circ p^*\circ g:V^*\to U_G^*$, where $U_G$ is a complemented
finitely related submodule of $W$ containing $G$ and $i:U_G\to W$ is the inclusion.
By what we have already proved there exists $f_0\in\lhom{R}{U_G}{V}$ such that
$\inner{u}{(i^*\circ p^*\circ g)\rho}=\inner{uf_0}{\rho}$ for all $\rho\in H$ and $u\in G$.
Let $f:=f_0\circ(q|U)$, where $q:W\to U_G$ is a projection. Then $f\in\lhom{R}{U}{V}$ and 
$\inner{u}{g\rho}=\inner{uip}{g\rho}
=\inner{u}{i^*p^*g\rho}=\inner{uf_0}{\rho}=\inner{uf}{\rho}$ for all $u\in G$
and $\rho\in H$.
\end{proof}

Assume that $U$ has the FRCP.  If $b\in\biend{R}{U}$, then $fb=bf$ for all $f\in\lend{R}{U}$, hence  $b^*f^*=f^*b^*$. 
Since  every $g\in\etr$ can be approximated by maps $f^*$ by Theorem \ref{th1},
it follows that $b^*g=gb^*$. Hence $b^*\in\lend{\etr}{U^*}$. By associativity of composition of maps we also have that 
$b^*\in\lend{\com{C}}{U^*}$, hence $b\in \lend{\etr}{U^*}\cap\lend{\com{C}}{U^*}$ and
we can state the following corollary.

\begin{co}\label{co1}If in Theorem \ref{th1} $V=U$  then 
\begin{equation}\label{01}\biend{R}{U}\subseteq\lend{\etr}{U^*}\cap\lend{\com{C}}{U^*},\end{equation}
where $\biend{R}{U}=\{g^*:\, g\in\biend{R}{U}\}$. In particular, if $\com{C}=C$, then
\begin{equation}\label{1}\biend{R}{U}\subseteq\biend{R}{U^*},
\end{equation} 
\end{co}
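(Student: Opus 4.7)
My plan is to show that for each $b \in \biend{R}{U}$, the adjoint $b^* : U^* \to U^*$ defined by $b^*\rho = \rho\circ b$ lies in $\lend{\com{C}}{U^*} \cap \lend{\etr}{U^*}$. First, $b$ is automatically $C$-linear: because $C$ sits in the center of $R$, each left multiplication $\lambda_c$ ($c\in C$) is an $R$-endomorphism of $U$, and $b \in \biend{R}{U}$ must commute with all such. Hence $b^*$ is well-defined. Its $\com{C}$-linearity is a routine associativity check: for $t \in \com{C}$ and $\rho \in U^*$,
\[
b^*(t\rho) = (t\rho)\circ b = t\circ(\rho\circ b) = t\,b^*\rho.
\]

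The real content is verifying $b^*g = gb^*$ for every $g \in \etr$. For $g$ of the form $f^*$ with $f \in \lend{R}{U}$ this is immediate, since $b$ and $f$ commute as maps on $U$. To promote this to an arbitrary $g \in \etr$, I invoke Theorem \ref{th1}. Fix $\rho \in U^*$ and $u \in U$ and apply the theorem with $V = U$, $G = \{u, bu\}$, and $H = \{\rho, b^*\rho\}$ to produce $f \in \lend{R}{U}$ satisfying
\[
\inner{u}{b^*g\rho} = \inner{bu}{g\rho} = \inner{f(bu)}{\rho}, \qquad \inner{u}{gb^*\rho} = \inner{f(u)}{b^*\rho} = \inner{b\,f(u)}{\rho}.
\]
Since $b$ and $f$ commute as functions on $U$, we have $f(bu) = b(f(u))$, so the two right-hand sides coincide. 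As $u$ and $\rho$ were arbitrary, $b^*g = gb^*$, establishing (\ref{01}).

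For the special case $\com{C} = C$: since $C$ is commutative, each left multiplication $\lambda_t$ by $t \in \com{C}$ now belongs to $\etr$, so any element of $\lend{\etr}{U^*}$ is automatically $\com{C}$-linear and the $\com{C}$-linearity condition becomes redundant. Moreover, the inclusion $C\subseteq R$ with $C$ central forces every right $R$-linear endomorphism of $U^*$ to be left $\com{C} = C$-linear, so $\etr = \lend{R}{U^*}$ and $\lend{\etr}{U^*} = \biend{R}{U^*}$, yielding (\ref{1}). The only real obstacle in the argument, namely approximating an arbitrary $g \in \etr$ by some $f^*$ with $f \in \lend{R}{U}$ on a prescribed finite pool of evaluations, has already been packaged into Theorem \ref{th1}; what remains is bookkeeping of the side conventions.
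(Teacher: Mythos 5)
Your proposal is correct and follows essentially the same route as the paper: the paper's argument (given in the paragraph preceding the corollary) likewise notes that $b$ commutes with every $f\in\lend{R}{U}$, hence $b^*$ commutes with every $f^*$, and then uses the approximation of an arbitrary $g\in\etr$ by such adjoints from Theorem \ref{th1} to conclude $b^*g=gb^*$, with $\com{C}$-linearity of $b^*$ coming from associativity of composition. Your write-up merely makes explicit the choice of finite sets $G=\{u,bu\}$, $H=\{\rho,b^*\rho\}$ and the identification $\etr=\lend{R}{U^*}$ when $\com{C}=C$, both of which the paper leaves implicit.
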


\begin{re}If $U$ is faithful and flat over $R$ with $TU=U$ then by Theorem \ref{th3} (\ref{1}) holds even if $\com{C}\ne C$. 
The author does not know 
if  (\ref{1}) can fail when the condition $\com{C}=C$ is omitted.
(Note that since $Z$ is an injective cogenerator in $\rmod{C}$ and $C$ is commutative, the assumption $\com{C}=C$ 
implies that $C$ admits a Morita duality by  \cite[19.43]{La}, hence  by \cite{A} $C$ must  be 
linearly compact in $\rmod{C}$.) 
\end{re}

\section{Density}

In general, for a faithful flat $U\in\lm{R}$ the inclusion $\tilde{B}=\biend{R}{U^*}\subseteq\biend{R}{U}=B$
does not hold (as we have noted immediately after Theorem \ref{th3}) and it is more natural to ask if at least 
$\tilde{B}$ is contained is an appropriate closure of 
$B$. 

If $U\in\lm{R}$ is $T$-accessible, then for many rings $R$ the condition
that $u\in Ru$ for all $u\in U^n$ and all $n\in\mathbb{N}$ (used in the Proposition \ref{pr400} below) is automatically
satisfied. We will see in a moment that this is so if $R$ has the following property:  
for each idempotent two-sided ideal $J$ of $R$ (that is, $J^2=J)$ and each finite subset
$F$ of $J$ there exists an element $e\in J$ such that $er=r$ for all $r\in F$.
(This property holds, for example, if all idempotent two-sided ideals in $R$ are generated by idempotents, which includes
all commutative noetherian rings by \cite[2.43]{La}.)
Namely, in this case, given $u=(u_1,\ldots,u_n)\in U^n$, since $TU=U$, each $u_i$ is a finite sum $\sum_jt_{i,j}u_{i,j}$,
where $t_{i,j}\in T$ and $u_{i,j}\in U$. If  $e\in T$ is such that $et_{i,j}=t_{i,j}$ for all $i,j$, then $eu=u$,
hence $u\in Tu$. The first part of the following proposition, or at least a variation of it, is known \cite[1.3]{F}, but we will present a 
very short direct proof. The hypothesis that $u\in Tu$ can be replaced by the assumption that $U$ is a $\Sigma$-self generator in the sense 
of \cite{Z} without essentially changing the proof also in the second part of the proposition.

\begin{pr}\label{pr400} Let $U$ be an $R,C$-bimodule which is faithful as a left $R$-module and let $T$ be the trace ideal (in $R$) 
of $U$ as a left $R$-module. Suppose that $u\in Tu$
for each $u\in U^n$ and each $n\in\mathbb{N}$.   

(i) Then $R$ is dense in $B:=\biend{R}{U}$ in the sense that
for each $b\in B$ and each finite subset $G\subseteq U$ there exists an $r\in R$ such that $ru=bu$ for all $u\in G$.

(ii) Suppose that  $C$ is contained in the center of $R$ and $cu=uc$ for all $u\in U$ and $c\in C$. If $U$ is torsionless and flat as an $R$-module and if $\lend{C}{Z}=C$, then $R$ is weakly dense in $\tilde{B}:=\biend{R}{U^*}$ in the sense that for each
$s\in\tilde{B}$ and each finite subset $H$ of $U\times U^*$ there exists $r\in R$ such that $\inner{u}{\rho r}=\inner{u}{\rho s}$
for all $(u,\rho)\in H$.
\end{pr}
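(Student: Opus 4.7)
The plan for both parts of Proposition~\ref{pr400} is to exploit the trace-ideal decomposition in the same spirit as Lemma~\ref{le11}, enhanced in part (ii) by the $U\hookrightarrow U^{**}$ density of Lemma~\ref{le301}(iii).

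For (i), I would collect the finitely many elements of $G$ into a single tuple $u:=(u_1,\dots,u_n)\in U^n$. The hypothesis supplies $t\in T$ with $tu_k=u_k$ for every $k$, and writing $t=\sum_j f_j(v_j)$ with $v_j\in U$, $f_j\in\lhom{R}{U}{R}$, the biendomorphism identity (\ref{111}) gives $b(f_j(v_j)y)=f_j(bv_j)y$ for every $y\in U$, so
\[
bu_k \;=\; b(tu_k)\;=\;\sum_j f_j(bv_j)\,u_k\;=\;ru_k,\qquad r:=\sum_j f_j(bv_j)\in R,
\]
which is exactly the required density.

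For (ii), start again from $t=\sum_j f_j(v_j)$ with $tu_k=u_k$ for all $k$. The key new observation is that for each pair $(j,k)$ the map $\phi_{j,k}\colon U\to U$, $v\mapsto f_j(v)u_k$, is left $R$-linear (because $f_j$ is), so its adjoint $\phi_{j,k}^{*}\in\lend{R}{U^{*}}$. Hence $s\in\tilde B$ commutes with every $\phi_{j,k}^{*}$. Moreover, the assumption $\lend{C}{Z}=C$ ensures that left $\com C$-multiplications on $U^{*}$ belong to $\lend{R}{U^{*}}$, so $s$ commutes with left $\com C$-action too; in particular $\sigma_j\colon U^{*}\to Z$, $\sigma_j(\rho):=(\rho s)(v_j)$, is $\com C$-linear and lies in $U^{**}$. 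Setting $\alpha_{j,k}:=\rho_k\circ\phi_{j,k}\in U^{*}$, the commutation $(\rho s)\circ\phi_{j,k}=(\rho\circ\phi_{j,k})s$ evaluated at $v_j$ yields $(\rho_k s)(f_j(v_j)u_k)=(\alpha_{j,k}\,s)(v_j)=\sigma_j(\alpha_{j,k})$, and summing over $j$,
\[
(\rho_k s)(u_k)\;=\;(\rho_k s)(tu_k)\;=\;\sum_j(\rho_k s)(f_j(v_j)u_k)\;=\;\sum_j\sigma_j(\alpha_{j,k}).
\]

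Then I would invoke Lemma~\ref{le301}(iii) for each fixed $j$, with $\theta=\sigma_j$ and the finite subset $\{\alpha_{j,k}\}_{k=1}^{n}\subseteq U^{*}$, to obtain $w_j\in U$ with $\alpha_{j,k}(w_j)=\sigma_j(\alpha_{j,k})$ for every $k$. Setting $r:=\sum_j f_j(w_j)\in R$ completes the proof:
\[
\rho_k(ru_k)\;=\;\sum_j \rho_k(f_j(w_j)u_k)\;=\;\sum_j\alpha_{j,k}(w_j)\;=\;\sum_j\sigma_j(\alpha_{j,k})\;=\;(\rho_k s)(u_k).
\]

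The conceptual obstacle is that $s\in\tilde B$ need not commute with right multiplication by an individual element of $R\subseteq\tilde B$, so the argument of (i) does not transplant directly; the remedy is that $s$ does commute with adjoints of genuine $R$-endomorphisms of $U$, and the trace-ideal decomposition furnishes just enough such endomorphisms $\phi_{j,k}$ to reduce the single biendomorphism problem to finitely many applications of the $U\hookrightarrow U^{**}$ density. The torsionless and flatness hypotheses enter only indirectly, by guaranteeing (via Theorem~\ref{th3}(i) and Lemma~\ref{le12}) the ambient inclusions $R\subseteq B\subseteq\tilde B$ that make the density statement meaningful.
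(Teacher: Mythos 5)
Your proof of (i) is essentially the paper's own argument: both rest on the identity (\ref{111}) (that is, on $T$ being a left ideal of $B$), so that $bu_k=(bt)u_k$ with $bt\in T\subseteq R$. Your proof of (ii) is correct but takes a genuinely different route. The paper first uses the cogenerator property of $Z$ together with $\lend{C}{Z}=C$ to reduce the finite system to a single condition $\inner{u}{\rho R}=0$, $\inner{u}{\rho s}\ne0$ for a tuple $u\in U^n$, $\rho\in(U^n)^*$, and then derives a contradiction by showing that $((\rho R)_{\perp})^{\perp}$ is an intersection of kernels of $R$-endomorphisms of $(U^n)^*$ annihilating $\rho R$ --- a step that uses the flatness of $U$ (to obtain the $R$-injectivity of $(U^*)^n$) --- and is therefore invariant under $s$. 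You instead construct the approximant $r$ directly: the elementary endomorphisms $\phi_{j,k}=f_j\diamond u_k$ supplied by the trace decomposition of $t$ are right $C$-linear (here the hypotheses that $C$ is central and $cu=uc$ are genuinely needed), their adjoints lie in $\lend{R}{U^*}$, so $s$ commutes with them, and the problem collapses to the density of $U$ in $U^{**}$ from Lemma \ref{le301}(iii). This is more elementary and more general: your argument nowhere uses flatness or torsionlessness, and in fact even $\lend{C}{Z}=C$ is not needed where you invoke it, since left multiplication by any $t\in\com{C}$ on $U^*$ commutes with the right $R$-action automatically, so each $\sigma_j$ lies in $U^{**}=\lhom{\com{C}}{U^*}{Z}$ without assuming $\com{C}=C$. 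What the paper's approach buys is the structural fact $\rho s\in((\rho R)_{\perp})^{\perp}$ (the invariance of double annihilators under biendomorphisms), which is of independent interest; what yours buys is an explicit approximant and visibly weaker hypotheses.
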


\begin{proof} (i) Let $b\in B$. It suffices to show that $bu\in Ru$ for each $u\in U$,
for then the proposition follows by applying this to the modules $U^n$ ($n\in\bn$) instead of $U$. By the hypothesis 
$u\in Tu$ and so $Ru=Tu$. By Lemma \ref{le11} $T$ is a left
ideal in $B$, hence $Tu$ is an $B$-submodule of $U$. So  $bRu=bTu\subseteq Ru$, 
hence in particular $bu\in Ru$.

(ii) Let $(u_i,\rho_i)$ ($i=1,\ldots,n$) be elements of $H$ and denote
$$z:=(\inner{u_1}{\rho_1s},\ldots,\inner{u_n}{\rho_ns})\in Z^n,\ \ 
V:=\{(\inner{u_1}{\rho_1r},\ldots,\inner{u_n}{\rho_nr}),\, r\in R\}\subseteq Z^n.$$
We have to prove that $z\in V$ and for this we will show that the assumption $z\notin V$ leads to a contradiction. If $z\notin V$, then (since
$Z$ is an injective cogenerator in $\rmod{C}$ and $V$ is a $C$-submodule of $Z^n$), there exists $c\in\lhom{C}{Z^n}{Z}$ such that $cV=0$ and
$cz\ne0$. Since $\lend{C}{Z}=C$ by the hypothesis, $c=(c_1,\ldots,c_n)$ for some elements $c_j\in C$ and we have now
$$\sum_j\inner{u_jc_j}{\rho_jr}=0\ \mbox{for all}\ r\in R,\ \mbox{while}\ \sum_j\inner{u_jc_j}{\rho_js}=cz\ne0.$$
Denoting $u:=(u_1c_1,\ldots,u_nc_n)\in U^n$ and $\rho:=(\rho_1,\ldots,\rho_n)\in (U^*)^n=(U^n)^*$, this means that
\begin{equation}\label{201}\inner{u}{\rho R}=0\ \ \mbox{and}\ \ \inner{u}{\rho s}\ne0.\end{equation}

To prove that (\ref{201}) leads to a contradiction, we use that $v\in Tv$ for all 
$v\in U^n$. This (together with the definition
of $T$) implies that every (cyclic, hence every) $R$-submodule $U_0$ of $U^n$ is equal to the sum of the images of all $R$-module homomorphisms 
from $U^n$ to $U_0$. Applying this to $U_0=(\rho R)_{\perp}$, the annihilator of $\rho R$ in $U^n$, it follows that for each 
$\omega\in (U^n)^*\setminus ((\rho R)_{\perp})^{\perp}$
there exists a nonzero
$R$-module homomorphism $f:U^n\to (\rho R)_{\perp}/(\rho R+\omega R)_{\perp}$. Then extending the adjoint $f^*$ (using the $R$-injectivity of
$(U^*)^n$, a consequence of the flatness of $U$) we find an $R$-endomorphism $g:(U^n)^*\to (U^n)^*$ such that $g(\rho R)=0$ and $g(\omega)\ne0$.
This means that $((\rho R)_{\perp})^{\perp}$ is the intersection of kernels of all $g\in\lend{R}{(U^n)^*}$ satisfying $g(\rho R)=0$. Since such kernels
are invariant under each biendomorphism $s\in\tilde{B}$ (because $s$ and $g$ commute), it follows that $((\rho R)_{\perp})^{\perp}$ is invariant under $s$;
in particular $\rho s\in((\rho R)_{\perp})^{\perp}$. But this contradicts (\ref{201}).
\end{proof}

\section{$\biend{R}{R^*}$ and Arens products on $R^{**}$}

Throughout this section we assume that $R$ is a $C$-algebra.
The second dual $R^{**}$ acts on $R^*$  from the right and from the left side as 
\begin{equation}\label{B2}\inner{r}{\rho s}:=\inner{s}{r \rho}\ \ \ \mbox{and}\ \ \ \inner{r}{s\rho}:=\inner{s}{\rho r}
\ \ \ (r\in R,\ \rho\in R^*, s\in R^{**}),\end{equation}
where (we recall) $r\rho$ and $\rho r$ are defined by $\inner{x}{r\rho}=\inner{xr}{\rho}$ and $\inner{x}{\rho r}=
\inner{rx}{\rho}$ ($x\in R$).
As it is well-known from  Banach algebra theory \cite[1.4.1]{P},  there are two associative algebra products on $R^{**}$
that extend the product on $R$ and were defined by Arens as follows:
\begin{equation}\label{B1}\inner{s\cdot t}{\rho}:=\inner{s}{t\rho}\ \ \ \mbox{and}\ \ \ \inner{s\diamond t}{\rho}:=
\inner{t}{\rho s}\ \ \ (s,t\in R^{**},\ \rho\in R^*).\end{equation}
$R$ is called {\em Arens $Z$-regular}  (where $Z$ is the injective cogenerator in $\rmod{C}$ relative to
which the duality is defined) if the two products $\cdot$ and $\diamond$ coincide on $R^{**}$. When $Z$ is the minimal 
injective cogenerator in $\rmod{C}$ (that is, the injective hull of the direct sum of `all' simple $C$-modules 
(see \cite[19.13]{La})
then we simply say that $R$ is {\em Arens regular}.

It can be verified that the left multiplication $\lambda_t$ on $R^*$ by each element
$t\in R^{**}$ commutes with the right multiplication by every $r\in R$, hence the set $\lambda_{R^{**}}$ of all such 
multiplications is contained in
the endomorphism ring $\lend{R}{R^*}$ of $R^*\in\rmod{R}$. Conversely, each $\phi\in\lend{R}{R^*}$ commutes with
the right multiplication on $R^*$ by each $r\in R$, therefore $\phi^*$ commutes with
the left multiplication by $r$ on $R^{**}$, hence $\phi^*|R$ must be the right multiplication by the 
element $t:=\phi^*(1)$. Then $\inner{(\phi-\lambda_t)(R^*)}{R}=\inner{R^*}{(\phi^*-\lambda_t^*)(R)}=0$, hence $\phi=\lambda_t$.
This shows that $\lend{R}{R^*}=\lambda_{R^{**}}$. 

Similarly, since a biendomorphism $\psi\in\biend{R}{R^*}$ must commute 
with all elements of $\lambda_{R^{**}}=\lend{R}{R^*}$, $\psi^*$ commutes with $\lambda_u^*$ for each $u\in R^{**}$.
But directly from the definitions we can see that $\lambda_u^*$ is given by $\inner{t}{\lambda_u^*}=t\cdot u$ for all
$t\in R^{**}$, hence $\psi^*(t\cdot u)=\psi^*(t)\cdot u$ and it follows that $\psi^*(u)=s\cdot u$, where
$s=\psi^*(1)$. Now for each $\rho\in R^*$ and $r\in R$ we have $\inner{r}{\rho s}=\inner{s}{r\rho}=
\inner{s\cdot r}{\rho}=\inner{\psi^*(r)}{\rho}=\inner{r}{\psi(\rho)}$, hence $\psi$ must be the right multiplication
on $R^*$ by 
the element $s\in R^{**}$. Moreover, since $\psi$ commutes with $\lambda_{R^{**}}$, $s$ must satisfy 
$(t\rho)s=t(\rho s)$ for all $t\in R^{**}$ and $\rho\in R^*$,
that is
\begin{equation}\label{B3}\inner{s}{rt\rho}=\inner{t}{\rho sr}\ \ \mbox{for all}\ r\in R.\end{equation}
With $r=1$ (\ref{B3}) and (\ref{B1}) show that $\inner{s\cdot t}{\rho}=\inner{s}{t\rho}=\inner{t}{\rho s}=
\inner{s\diamond t}{\rho}$
for all $\rho\in R^*$, hence 
\begin{equation}\label{B4}s\cdot t=s\diamond t\ \ \mbox{for all}\ t\in R^{**}.\end{equation}
Conversely, (\ref{B4}) means that $\inner{s}{t\rho}=\inner{t}{\rho s}$ for all $\rho\in R^*$; replacing in this equality
$t$ by $rt$ ($r\in R$) we obtain (\ref{B3}). Elements $s\in R^{**}$ satisfying (\ref{B4}) constitute a subring of 
$(R^{**},\cdot)$
(and of $(R^{**},\diamond)$)
which in Banach algebra theory is called the {\em left topological center} of $R^{**}$ \cite[2.24]{DLS}. 
This proves the first part of the following proposition. 

\begin{pr}\label{prb} Regard $R^*$ as a right $R$-module in the usual way.

(i)  The ring $\lend{R}{R^*}$ consists of left multiplications by all elements of $R^{**}$, 
while  $\biend{R}{R^*}$ consists of right multiplications by all elements $s\in R^{**}$
satisfying $s\cdot t=s\diamond t$ for all $t\in R^{**}$. 

(ii) $\biend{R}{R^*}$ consists of right multiplications by elements $s\in R^{**}$ such that for each finite subset
$F$ of $R^*$ there exists an element $r_F\in R$ satisfying $\rho s=\rho r_F$ for all $\rho\in F$.

(iii) If $R$ is linearly compact in $\rmod{R}$, then $\biend{R}{R^*}=R$.
\end{pr}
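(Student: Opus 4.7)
Proposition~\ref{prb}(i) is established in the inline discussion immediately preceding the statement, so I focus on (ii) and (iii).

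For (ii), I must show the equivalence between the characterisation from (i)---namely, $s\cdot t=s\diamond t$ for every $t\in R^{**}$---and the finite-approximation condition stated. The direction $(\Leftarrow)$ is the cleaner one: given the approximation condition, fix $t\in R^{**}$ and $\rho\in R^*$, and apply the condition to the two-element set $F=\{\rho,t\rho\}\subseteq R^*$. This produces an $r\in R$ with both $\rho s=\rho r$ and $(t\rho)s=(t\rho)r$. The first equality gives $\inner{t}{\rho s}=\inner{t}{\rho r}$. Evaluating the second equality of elements of $R^*$ at $1\in R$ yields $s(t\rho)=(t\rho)(r)=t(\rho r)$, so $\inner{s}{t\rho}=t(\rho r)=\inner{t}{\rho r}=\inner{t}{\rho s}$, which is the identity characterising $Z_\ell(R^{**})$. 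For the reverse direction $(\Rightarrow)$, I apply Lemma~\ref{le301}(iii) (density of $R$ in $R^{**}$) to $s$ together with a finite subset of $R^*$ adapted to $F$---for instance $F\cup\{\rho_1 s,\ldots,\rho_n s\}$---and invoke the $Z_\ell$ identity $\inner{s}{u\rho}=\inner{u}{\rho s}$ for varying $u\in R^{**}$ to upgrade the pointwise agreement provided by the lemma to the required equality $\rho_i s=\rho_i r_F$ in $R^*$.

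For (iii), I combine (ii) with linear compactness of $R$. For each finite $F\subseteq R^*$, set $K_F:=\{r\in R:\rho r=0\text{ for all }\rho\in F\}$; because $\rho r=0$ encodes $\rho(rR)=0$, the set $K_F$ is a right ideal of $R$, hence a right $R$-submodule. By (ii), the set $\{r\in R:\rho s=\rho r\text{ for all }\rho\in F\}$ is the non-empty coset $r_F+K_F$. The system of relations $\{x-r_F\in K_F\}_{F\subseteq R^*\text{ finite}}$ is finitely consistent: given $F_1,\ldots,F_m$, take $F:=\bigcup_j F_j$; since $\rho s=\rho r_F$ for $\rho\in F$ implies $r_F\in r_{F_j}+K_{F_j}$ for each $j$, the element $r_F$ is a common solution of the finite subsystem. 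Linear compactness of $R$ in $\rmod{R}$ therefore produces a global solution $r\in R$ with $\rho s=\rho r$ for every $\rho\in R^*$. Since $1\in R$ gives $R\cdot R^*=R^*$, this forces $s=r$ in $R^{**}$, and hence $\biend{R}{R^*}=R$.

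The chief technical obstacle is the direction $(\Rightarrow)$ of (ii): Lemma~\ref{le301}(iii) only guarantees agreement of $s$ and $r_F$ on finitely many pairings valued in $Z$, whereas the statement demands the stronger equality $\rho s=\rho r_F$ as elements of $R^*$---an infinite condition. The $Z_\ell$ hypothesis must be invoked to bridge this gap, most cleanly by converting the desired equality, via the identity $\inner{t}{\rho s}=\inner{s}{t\rho}$, into the vanishing of $\inner{s-r_F}{t\rho_i}$ for all $t\in R^{**}$, which can then be verified by appealing to the fact that the natural injection $R\hookrightarrow R^{**}$ separates $R^*$.
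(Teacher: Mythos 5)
Your treatment of (i) and (iii) is essentially sound: (i) is indeed the inline computation preceding the statement, and your proof of (iii) is the paper's argument up to bookkeeping (the paper indexes the finitely solvable system by singletons $\{\rho\}$ with the right ideals $J_{\{\rho\}}=\ann{R}{\rho}$, you index it by all finite $F$ with $K_F$; both work). Your $(\Leftarrow)$ direction of (ii) --- applying the approximation condition to $F=\{\rho,t\rho\}$ and evaluating the resulting equalities at $1$ --- is also correct, and is a clean direct way to see that the approximation property places $s$ in the left topological center.

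The genuine gap is in the $(\Rightarrow)$ direction of (ii), precisely where you flag ``the chief technical obstacle.'' The required conclusion $\rho_i s=\rho_i r_F$ in $R^*$ amounts to $\inner{s}{x\rho_i}=\inner{r_F}{x\rho_i}$ for \emph{all} $x\in R$, i.e.\ agreement of $s$ and $r_F$ on the (generally infinite) submodule $\sum_i R\rho_i$ of $R^*$. Lemma \ref{le301}(iii) only produces an element of $R$ agreeing with $s$ on a prescribed \emph{finite} subset of $R^*$, and no choice of auxiliary finite set ($F\cup\{\rho_1s,\ldots,\rho_ns\}$ or otherwise) converts finitely many $Z$-valued agreements into the infinite family needed; your closing claim that the identity $\inner{s}{t\rho}=\inner{t}{\rho s}$ together with ``$R\hookrightarrow R^{**}$ separates $R^*$'' bridges this is not an argument --- it restates the desired conclusion. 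The paper closes the gap by a different mechanism: it shows that $R^*$ is a cogenerator in $\rmod{R}$ (via a monomorphism $X\to X^{**}\stackrel{q^*}{\rightarrow}(R^*)^G$) and then invokes the density theorem for cogenerators \cite[p. 164]{AF}. Concretely: if $(\rho_1 s,\ldots,\rho_n s)\notin(\rho_1,\ldots,\rho_n)R$ inside $(R^*)^n$, the cogenerator property yields $f=(\lambda_{t_1},\ldots,\lambda_{t_n}):(R^*)^n\to R^*$ annihilating $(\rho_1,\ldots,\rho_n)R$ (hence $\sum_j t_j\rho_j=0$) but not $(\rho_1 s,\ldots,\rho_n s)$; since right multiplication by $s$ commutes with each $\lambda_{t_j}$ (this is where the topological-center hypothesis from (i) actually enters), $\sum_j t_j(\rho_j s)=(\sum_j t_j\rho_j)s=0$, a contradiction. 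Some such separation argument resting on the cogenerator property of $R^*$, rather than on Lemma \ref{le301}(iii), is what your proof is missing.
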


\begin{proof} We have already proved (i). Then part (ii) follows from the fact that $R^*$ is a cogenerator in $\rmod{R}$ since for
any cogenerator $M\in\rmod{R}$ the ring $R$ is dense in $\biend{R}{M}$ (that is, on finite subsets of $M$ each
biendomorphism coincides with the multiplication by an element of $R$) by  \cite[p. 164]{AF}. (To see that
$R^*$ is indeed a cogenerator, given $X\in\rmod{R}$, let $G$ be so large that there is an epimorphism $q:R^{(G)}\to X^*$ and let
$X\to X^{**}$ be the natural map. Then
we have a monomorphism $X\to X^{**}\stackrel{q^*}{\rightarrow}(R^*)^G$.)

(iii) Let $s\in\biend{R}{R^*}$. By (ii) for each finite subset $F$ of $R^*$ there exists an element
$r_F\in R$ such that $\rho r_F=\rho s$. Let $J_F=\ann{R}{F}$, a right ideal in $R$. For each finite subset
$F=\{\rho_1,\ldots,\rho_n\}$ of $R^*$ we have that $r_F-r_{\{\rho_j\}}\in J_{\{\rho_j\}}$. Then by the definition of 
linear compact modules 
there exists an $r\in R$ such that $r-r_{\{\rho\}}\in J_{\{\rho\}}$ for all $\rho\in R^*$. This implies that
$\rho s=\rho r_{\{\rho\}}=\rho r$, hence $s=r$.
\end{proof}

\begin{pr}If $\biend{R}{R^*}=R$ (where $R^*$ is regarded as a right $R$-module) then $\biend{R}{U^*}=R$ for every 
generator $U\in\lm{R}$.
\end{pr}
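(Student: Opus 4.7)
The plan. Being a generator, $U$ admits elements $u_1,\dots,u_n\in U$ and $\pi_1,\dots,\pi_n\in\lhom{R}{U}{R}$ with $\sum_i\pi_i(u_i)=1$; packaging these as $\iota:R\to U^n$ with $\iota(1)=(u_1,\dots,u_n)$ and $\pi:=(\pi_1,\dots,\pi_n):U^n\to R$ one has $\pi\iota=1_R$, so $R$ is a direct summand of $U^n$ in $\lm{R}$. Dualizing yields $\pi^*:R^*\hookrightarrow(U^*)^n=(U^n)^*$ and $\iota^*:(U^*)^n\twoheadrightarrow R^*$ with $\iota^*\pi^*=1_{R^*}$ in $\rmod{R}$, and an idempotent $e:=\pi^*\iota^*\in\lend{R}{(U^*)^n}$.

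Given $g\in\biend{R}{U^*}$, let $\tilde g:=g^n\in\biend{R}{(U^*)^n}$ be its diagonal extension (using the natural iso $\biend{R}{X^n}\cong\biend{R}{X}$). Commuting with $e$ forces $\tilde g$ to preserve the summand $\pi^*(R^*)$; extending any $h\in\lend{R}{R^*}$ to $\tilde h:=\pi^* h\iota^*\in\lend{R}{(U^*)^n}$ and using $\tilde g\tilde h=\tilde h\tilde g$ shows that the induced map $\bar g$ with $\tilde g\pi^*=\pi^*\bar g$ lies in $\biend{R}{R^*}$, so by hypothesis $\bar g$ is right multiplication by some $r\in R$. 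Combining $e\tilde g=\tilde g e$ with $\iota^* e=\iota^*$ and $\tilde g\pi^*=\pi^*\bar g$ then gives $\iota^*\tilde g=\bar g\iota^*$, and decomposing $\iota^*$ componentwise via $\hat u_k:R\to U$, $s\mapsto su_k$, and feeding in vectors supported in a single coordinate $k$ yields $\hat u_k^{*}(g(\rho))=\hat u_k^{*}(\rho)\cdot r$ for every $\rho\in U^*$ and each $k$; evaluating at $s=1$ this reads $g(\rho)(u_k)=\rho(ru_k)$.

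The main obstacle is promoting equality from the finitely many $u_k$ to all of $U$. The crucial move is that the generator data is elastic: for an arbitrary $u\in U$, append the trivial pair $(u,0)$ to the splitting; since $\sum_i\pi_i(u_i)+0=1$, the enlarged tuple is still a generator splitting, so rerunning the construction produces some $r'\in R$ with $g(\rho)(u_k)=\rho(r' u_k)$ for $k=1,\dots,n$ and also $g(\rho)(u)=\rho(r' u)$. Matching with the original relation on the $u_k$ and using that $Z$ is a cogenerator in $\rmod{C}$ gives $(r'-r)u_k=0$ for $k\le n$. The essential observation is then that $\bigcap_{k=1}^{n}\{a\in R:au_k=0\}=0$: indeed $au_k=0$ implies, by left $R$-linearity of $\pi_k$, that $a\pi_k(u_k)=\pi_k(au_k)=0$, so $a=a\sum_k\pi_k(u_k)=0$. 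Hence $r'=r$, and $g(\rho)(u)=\rho(ru)$; as $u$ was arbitrary, $g$ is right multiplication by $r$, completing the proof.
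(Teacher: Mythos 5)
Your proof is correct, and its skeleton coincides with the paper's: both realize $R$ as a direct summand of $U^n$, pass to $(U^*)^n\cong R^*\oplus V^*$ using $\biend{R}{X^n}\cong\biend{R}{X}$, and use commutation with the resulting idempotent and with $\lend{R}{R^*}$ to cut out an element of $\biend{R}{R^*}=R$. The two arguments diverge only in how the right multiplication by $r$ is propagated from the $R^*$-summand to all of $U^*$. The paper does this in one stroke: for each $v$ in the complement $V$ it feeds the nilpotent off-diagonal endomorphism of $R^*\oplus V^*$ determined by $f_v^*$ (where $f_v:R\to V$ is $r\mapsto rv$) into the commutation relation and reads off that the $V^*$-component of the biendomorphism is right multiplication by the same element. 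You never touch the complement: you extract only the relations $g(\rho)(u_k)=\rho(ru_k)$ at the finitely many generator elements, then re-run the whole construction with the family enlarged by an arbitrary $u$ paired with the zero functional, and identify the two resulting ring elements via $\bigcap_{k}\ann{R}{u_k}=0$ (a consequence of $\sum_k\pi_k(u_k)=1$) together with the cogenerator property of $Z$. Both finishes are sound; the paper's is a single uniform computation, while yours substitutes a uniqueness argument for it and has the mild advantage of never having to name $V$ or $V^*$.
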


\begin{proof}Since $U$ is a generator there exists an epimorphism $U^n\to R$ (for some $n\in\mathbb{N}$), hence $U^n=R\oplus V$
for a submodule $V$ of $U$. Then $(U^*)^n=R^*\oplus V^*$. Since a biendomorphism $\phi\in\biend{R}{(U^*)^n}$ commutes
with the projections of $(U^*)^n$ onto the two summands $R^*$ and $V^*$, $\phi$ must be of the form $\phi=a\oplus\psi$,
where $a\in\biend{R}{R^*}$ and $\psi\in\biend{R}{V^*}$. For each $v\in V$ let $f_v\in\lhom{R}{R}{V}$ be defined by
$\inner{r}{f_v}:=rv$ and let $g\in\lend{R}{R^*\oplus V^*}$ be represented by the matrix
$$g=\left[\begin{array}{ll}
0&f_v^*\\
0&0\end{array}\right].$$
Then $\phi\circ g=g\circ\phi$, meaning that  $af_v^*=f_v^*\psi$, that is $\inner{\rho\circ f_v}{a}=\inner{\rho}{\psi}\circ f_v$ for all $\rho\in V^*$. 
By assumption $a$ is the right multiplication by an element
$r_0\in R$, hence the last equality means that $(\rho\circ f_v)r_0=\inner{\rho}{\psi}\circ f_v$ (the equality of two elements of
$R^*$), that is $\inner{r_0rv}{\rho}=\inner{rv}{\inner{\rho}{\psi}}=\inner{v}{\inner{\rho }{\psi}r}$ for all $r\in R$. Evaluating at $r=1$ 
we conclude that
$\inner{v}{\inner{\rho}{\psi}}=\inner{r_0v}{\rho}=\inner{v}{\rho r_0}$ for all $v\in V$ and $\rho\in V^*$, hence $\psi$
must be the right multiplication by $r_0$ on $V^*$. Consequently $\phi$ is the right multiplication by $r_0$ on 
$(U^*)^n$, which proves that $\biend{R}{(U^*)^n}=R$. Since $\biend{R}{U^*}=\biend{R}{(U^*)^n}$, this concludes the proof.
\end{proof}

Arens regular Banach algebras are characterized by the weak compactness of certain operators \cite{P} and characterizations in the same spirit
are known for certain topological algebras. Here we would like to characterize Arens regularity of $C$-algebras 
(relative to the given injective cogenerator $Z$) in purely algebraic terms. For this we first need some facts concerning extensions of 
bilinear forms. 

Given $C$-modules $X$ and $Y$, a $C$-bilinear map $\theta:X\times Y\to Z$ defines two $C$-module homomorphisms
$$t_l:X\to Y^*,\ t_l(x)(y):=\theta(x,y)\ \ \mbox{and}\ \ t_r:Y\to X^*,\ t_r(y)(x):=\theta(x,y).$$
Using the second adjoint $t_l^{**}:X^{**}\to Y^{***}$ we can define the left extension $\theta_l:X^{**}\times Y^{**}\to Z$ by
$$\theta_l(x^{**},y^{**}):=(t_l^{**}(x^{**}))(y^{**})\ \ (x^{**}\in X^{**},\ y^{**}\in Y^{**}).$$
Here $Y^{***}:=\lhom{\bcom{C}}{Y^{**}}{Z}$. Let us say that a $C$-bilinear map $\psi:X^{**}\times Y^{**}\to Z$ is {\em normal in the first variable} if 
every  map $\psi_{y^{**}}:X^{**}\to Z$, $\psi_{y^{**}}(x^{**}):=\psi(x^{**},y^{**})$ ($y^{**}\in Y^{**}$) is an 
evaluation at 
an element $x^*(y^{**})$ of $X^{*}$, that is, $\psi(x^{**},y^{**})=\inner{x^{**}}{x^*(y^{**})}$ for all $x^{**}\in X^{**}$.
Normality in the second variable is defined in the same way and $\psi$ is called {\em normal} if it is normal in each variable separately. 
The extension $\theta_l$ of $\theta$ is normal in the first variable since $\theta_l(x^{**},y^{**})=\inner{t_l^{**}(x^{**})}{y^{**}}=
\inner{x^{**}}{t_l^*(y^{**})}$
and $t_l^*(y^{**})\in X^*$. It can be easily seen that $\theta_l|(X^{**}\times Y)$ is the only $C$-bilinear extension of $\theta$ to $X^{**}\times Y$ which is normal
in the first variable. Similarly we can define the right extension $\theta_r$ of $\theta$ to $X^{**}\times Y^{**}$ by
$\theta_r(x^{**},y^{**}):=(t_r^{**}(y^{**}))(x^{**})$, and $\theta_r$ is normal in the second variable for each fixed 
$x^{**}\in X^{**}$. If the equality $\theta_l=\theta_r$ holds, then $\theta_l$ is
the (necessarily unique) normal extension of $\theta$ to $X^{**}\times Y^{**}$. Conversely, assume that there exists a normal $C$-bilinear extension
$\hat{\theta}:X^{**}\times Y^{**}\to Z$ of $\theta$. Then, since also $\theta_l(x,y^{**})$ is normal in the second variable for each $x\in X$
(namely, $\theta_l(x,y^{**})=\inner{t_l^{**}(x)}{y^{**}}=\inner{t_l(x)}{y^{**}}$ and $t_l(x)\in Y^*$) the equality 
$\hat{\theta}(x,y^{**})=\theta_l(x,y^{**})$
must hold for all $x\in X$ and $y^{**}\in Y^{**}$. But then, by normality in the first variable, $\hat{\theta}=\theta_l$. By symmetry we thus see that
$\theta$ has a normal extension to $X^{**}\times Y^{**}$ if and only if $\theta_l=\theta_r$. To better explain the meaning of this equality, let
$p_{Y^*}:Y^{***}\to Y^{*}$ be the adjoint of the
natural map $\iota_Y:Y\to Y^{**}$ and define an extension of $\theta$ by 
$$\tilde{\theta}(x^{**},y^{**}):=\inner{p_{Y^*}(t_l^{**}(x^{**}))}{y^{**}}=\inner{x^{**}}{(p_{Y^*}t_l^{**})^*(y^{**})}=
\inner{x^{**}}{(t_l^*\iota_Y)^{**}(y^{**})}.$$
Since $t_l^*\iota_Y=t_r$ (by a straightforward verification), it follows that $\tilde{\theta}=\theta_r$. Thus the equality $\theta_l=\theta_r$
means that $\inner{t_l^{**}(x^{**})}{y^{**}}=\theta_l(x^{**},y^{**})=\tilde{\theta}(x^{**},y^{**})=\inner{p_{Y^*}(t_l^{**}(x^{**}))}{y^{**}}$,
that is $t_l^{**}(x^{**})=p_{Y^*}(t_l^{**}(x^{**}))\in Y^*$.
We state these conclusions as a lemma.

\begin{lemma}\label{le201}A $C$-bilinear map $\theta:X\times Y\to Z$ can be extended to a normal $C$-bilinear map 
$\hat{\theta}:X^{**}\times Y^{**}\to Z$ if and only if the extension $\theta_l$ is normal in the second variable and this is equivalent
to the condition $\theta_l=\theta_r$ and also to  $t_l^{**}(X^{**})\subseteq Y^*$ (and, symmetrically,
to $t_r^{**}(Y^*)\subseteq X^*$).
\end{lemma}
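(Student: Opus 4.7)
The plan is to label the four equivalent conditions in the statement as (i) existence of a normal $C$-bilinear extension $\hat\theta$, (ii) normality of $\theta_l$ in the second variable, (iii) $\theta_l=\theta_r$, and (iv) $t_l^{**}(X^{**})\subseteq Y^*$. I would prove (i)$\iff$(ii) (the substantive step), then (ii)$\iff$(iv) as essentially a tautology, and finally (iv)$\iff$(iii) by the computation already performed in the paragraph preceding the lemma. The symmetric condition $t_r^{**}(Y^{**})\subseteq X^*$ then follows by interchanging the roles of $X$ and $Y$.

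Half of (i)$\iff$(ii) is free: $\theta_l$ is always normal in the first variable (as noted immediately before the lemma), so if (ii) holds then $\theta_l$ is already a normal extension. The substantive implication is (i)$\Rightarrow$(ii), for which I would show $\hat\theta=\theta_l$, whence $\theta_l$ inherits normality in the second variable from $\hat\theta$. Fixing $x\in X$, normality of $\hat\theta$ in the second variable supplies $\beta(x)\in Y^*$ with $\hat\theta(x,y^{**})=\inner{\beta(x)}{y^{**}}$; restricting to $y\in Y\subseteq Y^{**}$ gives $\beta(x)(y)=\theta(x,y)=t_l(x)(y)$, and since $\beta(x),t_l(x)\in Y^*=\lhom{C}{Y}{Z}$ agree on all of $Y$ they coincide. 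Hence $\hat\theta(x,y^{**})=\inner{t_l(x)}{y^{**}}$, which a direct unwinding of $t_l^*$ identifies with $\theta_l(x,y^{**})=\inner{x}{t_l^*(y^{**})}$. Now fixing $y^{**}$, normality of $\hat\theta$ in the first variable gives $\alpha(y^{**})\in X^*$ with $\hat\theta(x^{**},y^{**})=\inner{x^{**}}{\alpha(y^{**})}$, while $\theta_l(x^{**},y^{**})=\inner{x^{**}}{t_l^*(y^{**})}$; the two elements $\alpha(y^{**})$ and $t_l^*(y^{**})$ of $X^*$ agree on $X$ by the previous step, hence coincide, completing $\hat\theta=\theta_l$.

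The equivalence (ii)$\iff$(iv) is essentially tautological: writing $\theta_l(x^{**},\cdot)=t_l^{**}(x^{**})\in Y^{***}$, normality in the second variable for $\theta_l$ asserts exactly that this element of $Y^{***}$ is the canonical image of some element of $Y^*$ under the natural embedding $Y^*\hookrightarrow Y^{***}$, which is injective because it is split by $p_{Y^*}=\iota_Y^*$. Finally (iv)$\iff$(iii) was worked out in the paragraph preceding the lemma: with the same embedding, $\theta_r(x^{**},y^{**})=\inner{p_{Y^*}(t_l^{**}(x^{**}))}{y^{**}}$ realises $\theta_r$ as evaluation at the image of $p_{Y^*}(t_l^{**}(x^{**}))\in Y^*$, whereas $\theta_l(x^{**},y^{**})=\inner{t_l^{**}(x^{**})}{y^{**}}$ pairs with the a priori larger element $t_l^{**}(x^{**})\in Y^{***}$; since $p_{Y^*}$ is a retraction of the embedding, their equality for all $(x^{**},y^{**})$ is equivalent to $t_l^{**}(x^{**})$ lying in the image of $Y^*$ for every $x^{**}$. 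There is no deep obstacle — the work is mostly bookkeeping with the natural maps among $Y^*,Y^{**},Y^{***}$ and the module structures over $C$, $\com C$, $\bcom C$ that shift with each successive dualization, and all the real content has already been assembled in the paragraph preceding the lemma.
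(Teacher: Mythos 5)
Your proposal is correct and follows essentially the same route as the paper: the paper's argument is exactly the paragraph preceding the lemma, and your steps (normality of $\theta_l$ in the first variable, forcing $\hat{\theta}=\theta_l$ by restricting first to $X\times Y^{**}$ and then using normality in the first variable, and identifying $\theta_r$ with $\tilde{\theta}$ via $t_l^*\iota_Y=t_r$ and the retraction $p_{Y^*}$) reproduce it, merely reorganized into labelled equivalences. You also correctly read the final condition as $t_r^{**}(Y^{**})\subseteq X^*$, silently fixing the paper's typo.
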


\begin{lemma}\label{le202}Suppose that $Z$ is injective as a left module over $\com{C}=\lend{C}{Z}$ (in addition to being injective and
a cogenerator in $\rmod{C}$). Then a homomorphism $f:X\to Y$ of right $C$ modules satisfies 
$f^{**}(X^{**})\subseteq Y$ if and only if
$f(X)$ is a reflexive module (that is, $f(X)^{**}=f(X)$) and this is the case if and only if $f(X)$ is linearly compact in $\rmod{C}$.
\end{lemma}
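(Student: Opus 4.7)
The strategy is to handle the two claimed equivalences in turn. First I would prove $f^{**}(X^{**})\subseteq Y\iff f(X)$ is reflexive; then I would prove reflexive $\iff$ linearly compact, the second equivalence being where the hypothesis that $Z$ is injective as a left $\com{C}$-module really pulls its weight.

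For the first equivalence I would apply Lemma \ref{le301}(ii), whose hypothesis is met, to get $f^{**}(X^{**})=f(X)^{\perp\perp}$ as a submodule of $Y^{**}$; by Remark \ref{re0} the double annihilator is exactly the natural image of $f(X)^{**}$ in $Y^{**}$. Since $Z$ is a cogenerator in $\rmod{C}$, any $y\in Y\setminus f(X)$ is separated from $f(X)$ by some $\rho\in Y^*$ with $\rho|f(X)=0$ and $\inner{y}{\rho}\ne0$, so $y\notin f(X)^{\perp\perp}$. Hence $Y\cap f(X)^{\perp\perp}=f(X)$ inside $Y^{**}$, and the condition $f^{**}(X^{**})\subseteq Y$ collapses to $f(X)^{\perp\perp}=f(X)$, i.e.\ reflexivity of $f(X)$; the converse is immediate.

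For linearly compact $\Rightarrow$ reflexive I would combine the density statement Lemma \ref{le301}(iii) with the definition of linear compactness. Given $\theta\in f(X)^{**}$, for each $\rho\in f(X)^*$ density supplies some $y_\rho\in f(X)$ with $\inner{y_\rho}{\rho}=\inner{\theta}{\rho}$. The system of congruences $x-y_\rho\in\ker\rho$ indexed by $\rho\in f(X)^*$ is then finitely solvable: density applied to a finite set $\{\rho_1,\ldots,\rho_n\}$ produces $y\in f(X)$ with $\inner{y}{\rho_i}=\inner{\theta}{\rho_i}=\inner{y_{\rho_i}}{\rho_i}$, so $y-y_{\rho_i}\in\ker\rho_i$. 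Linear compactness of $f(X)$ then provides a common solution $y\in f(X)$, and $\inner{y}{\rho}=\inner{\theta}{\rho}$ for every $\rho\in f(X)^{*}$ together with the cogenerator property of $Z$ forces $\theta$ to be the evaluation at $y$, so $f(X)\to f(X)^{**}$ is surjective.

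The remaining implication, reflexive $\Rightarrow$ linearly compact, is what I expect to be the main obstacle. The plan is to note that a reflexive module $M:=f(X)$ has the form $N^*$ for $N:=M^*\in\lm{\com{C}}$, choose a presentation of $N$ as a quotient of a free $\com{C}$-module $\com{C}^{(I)}$ by a submodule $K$, and realise $M=N^*$ as the annihilator $K^{\perp}\subseteq Z^I$ of the corresponding relations. One then invokes the classical Morita-duality results: under the present two-sided injectivity hypothesis $Z$ itself is linearly compact in $\rmod{C}$, arbitrary products of a linearly compact module remain linearly compact (Leptin-Zelinsky), and annihilator submodules of products inherit linear compactness, so $K^{\perp}=M$ is linearly compact. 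The delicate point, and the reason the two-sided injectivity is genuinely needed, is to extract linear compactness of $Z$ itself in $\rmod{C}$ from the given hypotheses; here one cannot avoid the Morita-duality machinery already cited in the paper in connection with \cite[19.43]{La}.
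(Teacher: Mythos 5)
Your first equivalence ($f^{**}(X^{**})\subseteq Y$ iff $f(X)$ is reflexive) is exactly the paper's argument: Lemma \ref{le301}(ii) gives $f^{**}(X^{**})=f(X)^{\perp\perp}=f(X)^{**}$, and the cogenerator property of $Z$ yields $f(X)^{**}\cap Y=f(X)$ inside $Y^{**}$, so the inclusion into $Y$ forces reflexivity and conversely. For the second equivalence the paper does something much shorter: it simply cites M\"{u}ller's first theorem \cite[19.66]{La}, which asserts that in this duality setting reflexivity and linear compactness coincide. Your attempt to reprove that theorem succeeds in one direction: deducing reflexivity from linear compactness via the density statement of Lemma \ref{le301}(iii) and the definition of linear compactness is the standard argument and is correct (note only that once $\inner{y}{\rho}=\inner{\theta}{\rho}$ for all $\rho\in f(X)^*$ you have $\theta=\iota(y)$ by definition; no further appeal to the cogenerator property is needed).

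The direction reflexive $\Rightarrow$ linearly compact, however, has a genuine gap. The linear compactness used in this paper is \emph{discrete} linear compactness (see the definition in Section 2), and for that notion the claim that arbitrary products of a linearly compact module are linearly compact is false: over a field $\f$ with $Z=\f$, the module $\f$ is linearly compact, but $\f^{\bn}$ is an infinite-dimensional discrete vector space and is not --- it contains $\f^{(\bn)}$, in which the finitely solvable system $x-(e_1+\cdots+e_n)\in\mathrm{span}\{e_j:\, j>n\}$ has no solution, and submodules of linearly compact modules are linearly compact. The Leptin--Zelinsky product theorem you invoke concerns \emph{topological} linear compactness in the product topology; it would tell you that $K^{\perp}\subseteq Z^{I}$ is linearly compact in the induced (non-discrete) topology, which does not yield discrete linear compactness of $M=K^{\perp}$ without substantial further argument. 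So the chain ``$Z$ linearly compact $\Rightarrow$ $Z^{I}$ linearly compact $\Rightarrow$ $K^{\perp}$ linearly compact'' breaks at the first arrow. This implication is precisely the hard half of M\"{u}ller's theorem, and the clean fix is to do what the paper does: cite \cite[19.66]{La} rather than route the argument through products.
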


\begin{proof} By Lemma \ref{le301}(ii) we have 
$f(X)^{**}=f(X)^{\perp\perp}=f^{**}(X^{**})$. Therefore, if $f(X)$ is reflexive, $f^{**}(X^{**})=f(X)^{**}=f(X)\subseteq Y$.
For the converse, first note that for any $C$-submodule $V$ of $Y$, denoting by $\kappa:V\to Y$ the inclusion, the map 
$\kappa^{**}$
is injective (as can be seen by using Lemma \ref{le301}), so $V^{**}$ can be regarded as a submodule of $Y^{**}$. 
Moreover $V^{**}\cap Y=V$. (Indeed, suppose that $y\in (V^{**}\cap Y)\setminus V$.
Then there exists $y^*\in Y^*$ such that $y^*(V)=0$ and $y^*(y)\ne0$. But this implies that $y\notin V^{\perp\perp}$, which is a contradiction
since $V^{\perp\perp}=V^{**}$ by Remark \ref{re0}.) Applying this to $V=f(X)$ and using the equality $f(X)^{**}=f^{**}(X^{**})$, the assumption 
$f^{**}(X^{**})\subseteq Y$ implies that $f(X)^{**}=f(X)^{**}\cap Y=f(X)$, proving that $f(X)$ is reflexive. The reflexivity of $f(X)$ is equivalent
to linear compactness by M\" uler's first theorem \cite[19.66]{La}.
\end{proof}

\begin{theorem}\label{th203}Let $X,Y$ be $C$-modules, $Z\in\rmod{C}$ an injective cogenerator in terms of which the duals
are defined, $\theta:X\times Y\to Z$ a $C$-bilinear map and $t_l$, $t_r$ the associated operators defined above. Assume that
$Z$ is injective also in $\lm{\com{C}}$ (recall that $\com{C}=\lend{C}{Z}$).
Then $\theta$ can be extended to a normal $C$-bilinear map $\hat{\theta}:X^{**}\times Y^{**}\to Z$  if and
only if $t_l(X)$ is a reflexive (or, equivalently, linearly compact) $C$-module. (By symmetry this is the case
if and only if $t_r(Y)$ is a reflexive $C$-module.)
\end{theorem}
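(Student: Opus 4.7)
The plan is to stitch together the two preceding lemmas, which already do most of the work. Lemma \ref{le201} reduces the existence of a normal $C$-bilinear extension $\hat{\theta} \colon X^{**} \times Y^{**} \to Z$ to the purely algebraic condition $t_l^{**}(X^{**}) \subseteq Y^*$, where $Y^*$ is viewed inside $Y^{***}$ via the canonical embedding $\iota_{Y^*}$. Lemma \ref{le202}, under the injectivity hypothesis on $Z$ that we have already assumed, characterizes exactly this type of inclusion --- $f^{**}(X^{**}) \subseteq Y$ for a homomorphism $f \colon X \to Y$ of right $C$-modules --- as reflexivity of $f(X)$, equivalently (by Müller's theorem cited in the proof of Lemma \ref{le202}) as linear compactness of $f(X)$ in $\rmod{C}$.

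Concretely, I would proceed as follows. First, note that since $C$ is commutative (we are in the $C$-algebra setting), $Y^*$ carries a natural right $C$-module structure alongside its left $\com{C}$-structure, and $t_l \colon X \to Y^*$ is a morphism of right $C$-modules by bilinearity of $\theta$. Second, apply Lemma \ref{le202} with $f := t_l$ to obtain the chain of equivalences
\[
t_l^{**}(X^{**}) \subseteq Y^* \iff t_l(X) \text{ is reflexive} \iff t_l(X) \text{ is linearly compact in }\rmod{C}.
\]
Third, invoke Lemma \ref{le201} to replace the leftmost condition by the existence of a normal extension $\hat\theta$, completing the chain. The symmetric statement about $t_r(Y)$ follows by the parenthetical half of Lemma \ref{le201} --- which asserts that $t_l^{**}(X^{**}) \subseteq Y^*$ is already equivalent to $t_r^{**}(Y^{**}) \subseteq X^*$ --- combined with Lemma \ref{le202} applied to $f := t_r$ in place of $t_l$.

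I do not anticipate any real obstacle; the substantive content has been absorbed into Lemmas \ref{le201} and \ref{le202}. The only care needed is bookkeeping of bimodule structures to ensure that Lemma \ref{le202}, formulated for right $C$-modules, genuinely applies to $t_l$ (respectively $t_r$) with codomain $Y^*$ (respectively $X^*$), and that the identification of $Y^*$ with its image in $Y^{***}$ used in Lemma \ref{le201} matches the one used in Lemma \ref{le202}. Both checks are routine given the commutativity of $C$ and the injectivity of $Z$ in $\lm{\com{C}}$.
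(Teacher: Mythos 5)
Your proposal is correct and follows essentially the same route as the paper: the paper's proof likewise invokes Lemma \ref{le201} to reduce the existence of a normal extension to the inclusion $t_l^{**}(X^{**})\subseteq Y^*$, and then applies Lemma \ref{le202} with the $C$-module $Y^*$ in place of $Y$ to identify that inclusion with reflexivity (equivalently linear compactness) of $t_l(X)$. The bookkeeping of module structures you flag is exactly the point the paper also leaves implicit, so there is nothing further to add.
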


\begin{proof}By Lemma \ref{le201} $\theta$ can be extended to a normal $C$-bilinear map on $X^{**}\times Y^{**}$ if and only if
$t_l^{**}(X^{**})\subseteq Y^*$. By Lemma \ref{le202} (applied to the $C$-module $Y^*$ instead of $Y$) this is equivalent to 
the reflexivity of $t_l(X)$.
\end{proof}

\begin{re}Consider the topology on $X^{**}$ in which the basic neighborhoods of each $x_0^{**}\in X^{**}$ are all sets of the form
$$\mathcal{N}(x_0^{**};x_1^*,\ldots,x_n^*):=\{x^{**}\in X^{**}:\, \inner{x_j^*}{x^{**}-x_0^{**}}=0\ \ (j=1,\ldots,n)\},$$
where $\{x_1^*,\ldots,x_n^*\}$ runs through all finite subset of $X^*$. 
If $Z$ is faithfully balanced as a $\com{C},C$-bimodule (that is, if $\bcom{C}:=\lend{\com{C}}{Z}=C$), then $X^*$ contains precisely those
elements $x^{***}\in X^{***}$ which are continuous as maps from $X^{**}$ to $Z$, where $Z$ is equipped with the discrete topology.
({\em Proof.} Such $x^{***}$ must map a neighborhood $\mathcal{N}(0;x_1^*,\ldots,x_n^*)$ of $0$ into $0$. This means that $\ker x^{***}$ contains
the intersection $\cap_{j=1}^n\ker x_j^*$, hence $c_0:(\inner{x_1^*}{x^{**}},\ldots,\inner{x_n^*}{x^{**}})\mapsto\inner{x^{***}}{x^{**}}$ is
a well defined homomorphism of right $\bcom{C}$-modules with the domain 
$\{(\inner{x_1^*}{x^{**}},\ldots,\inner{x_n^*}{x^{**}}):\, 
x^{**}\in X^{**}\}$ and range contained in $Z$. Since $\bcom{C}=C$, by the $C$-injectivity of $Z$, $c_0$ can be extended to
a homomorphism $c:Z^n\to Z$, given by a $n$-tuple $(c_1,\ldots,c_n)$ of elements $c_j\in C$. Thus
$$\inner{x^{***}}{x^{**}}=(\inner{x_1^*}{x^{**}},\ldots,\inner{x_n^*}{x^{**}})c=\sum_j \inner{x_j^*}{x^{**}}c_j=\inner{\sum_jc_jx_j^*}{x^{**}}$$
for all $x^{**}\in X^{**}$, proving that $x^{***}=\sum_jc_jx_j^*\in X^*$.) In this case normality of a $C$-bilinear map 
$\psi:X^{**}\times Y^{**}\to Z$ means continuity in each variable separately. Since $X$ and $Y$ are dense in $X^{**}$ and $Y^{**}$ by Lemma \ref{le301}(iii)
$\theta_l$ and $\theta_r$ are the unique extensions of a $C$-bilinear map $\theta:X\times Y\to Z$ that are continuous in the first and in the second variable
(respectively). Moreover, the condition $\theta_l(x^{**},y^{**})=\theta_r(x^{**},y^{**})$ can be expressed as follows: 
choose nets $(x_F)$ and $(x_G)$ in
$X$ and $Y$ converging to $x^{**}$ and $y^{**}$ (respectively); then
$\lim_{F}\lim_{G}\theta(x_F,y_G)=\lim_{G}\lim_{G}\theta(x_F,x_G)$. This is analogous to the well-known situation in Banach algebra theory. 
\end{re}

Returning now to $C$-algebras $R$, by Proposition \ref{prb}(i) $R$ is  Arens $Z$-regular if and only if $\biend{R}{R^*}=R^{**}$, that is
$s(\rho t)=(s\rho)t$ for all $s,t\in R^{**}$ and $\rho\in R^*$. This means that 
\begin{equation}\label{301}\inner{r}{s(\rho t)}=\inner{r}{(s\rho) t}\ \ \mbox{for all}\ r\in R.\end{equation}
For fixed $r\in R$ and $\rho\in R^*$ both sides of (\ref{301}) are $C$-bilinear maps $R^{**}\times R^{**}\to Z$ which extend the $C$-bilinear map
$\theta:R\times R\to Z$ defined by $\theta(a,b)=\inner{r}{a\rho b}$. The left side of (\ref{301}) is normal in the variable $s$ since
$\inner{r}{s(\rho t)}=\inner{s}{\rho t r}$, where $\rho tr\in R^*$. Similarly the right side of (\ref{301}) is normal in the variable $t$, so
(\ref{301}) means that $\theta_l=\theta_r$. By Lema \ref{le201} and Theorem \ref{th203} this means that the range of the map $t_l:R\to R^*$, $\inner{b}{t_l(a)}=\theta(a,b)$
($a,b\in R$) is reflexive. From $\theta(a,b)=\inner{r}{a\rho b}=\inner{b}{ra\rho}$ we now observe that $t_l(a)=ra\rho$. Thus the 
Arens $Z$-regularity of $R$ is equivalent to the condition that all $C$-modules $rR\rho$ ($r\in R$, $\rho\in R^*$) are 
reflexive. If we assume that $Z$ is injective also as a left $\com{C}$-module, then reflexive $C$-modules are the same as
linearly compact $C$-modules by \cite[19.66]{La}, \cite{CFu}. Since 
$rR\rho$ is a $C$-submodule of $R\rho$ and submodules of linearly compact modules are linearly compact, this proves 
most of the following theorem.

\begin{theorem}\label{th204}Suppose that $Z$ is injective in $\lm{\com{C}}$ (in addition to being an injective cogenerator
in $\rmod{C}$). Then $R$ is Arens $Z$-regular if and only if $R\rho$ is reflexive (that is, linearly compact) $C$-module for each $\rho\in R^*$.
(By symmetry, this is also equivalent to the reflexivity of $\rho R$ for all $\rho$.) In particular if $C$ is linearly
compact in $\rmod{C}$, $R$ is Arens regular if and only if $R\rho$ is linearly compact in $\rmod{C}$. 
\end{theorem}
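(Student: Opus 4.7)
The core equivalence is already carried out in the paragraph preceding the statement: $R$ is Arens $Z$-regular if and only if every $C$-module of the form $rR\rho$ with $r\in R$ and $\rho\in R^*$ is reflexive. My plan is to finish the proof in three short steps, each of which is essentially bookkeeping on top of what has been done.

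First, I would replace the family $\{rR\rho:r\in R,\ \rho\in R^*\}$ by the single family $\{R\rho:\rho\in R^*\}$. The forward implication is immediate on taking $r=1$. For the reverse implication, the standing hypothesis that $Z$ is injective in $\lm{\com{C}}$ allows me to invoke M\"uller's first theorem \cite[19.66]{La}, \cite{CFu}, identifying reflexive $C$-modules with linearly compact ones. Since linear compactness passes to submodules and each $rR\rho$ sits as a $C$-submodule inside $R\rho$, reflexivity of $R\rho$ forces reflexivity of every $rR\rho$, completing the main equivalence.

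Second, the symmetric formulation in terms of $\rho R$ is handled by rerunning the preparatory argument with the roles of left and right exchanged: the same bilinear form $\theta(a,b)=\inner{r}{a\rho b}$ may be analysed via the map $t_r$ in Theorem \ref{th203} instead of $t_l$, and a routine check identifies its range as $\rho R r$. The reduction to the single family $\{\rho R:\rho\in R^*\}$ then proceeds exactly as in the first step, again using that each $\rho R r$ is a $C$-submodule of $\rho R$.

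Finally, for the concluding sentence I would specialize to the minimal injective cogenerator $Z$ in $\rmod{C}$, so that ``Arens regular'' refers to Arens $Z$-regularity for this particular $Z$. The assumption that $C$ is linearly compact in $\rmod{C}$ yields a Morita duality with this minimal $Z$, which forces $\bcom{C}=C$ and ensures that $Z$ is injective in $\lm{\com{C}}$ as well; the general hypothesis of the theorem is therefore automatically satisfied and the main equivalence applies verbatim. The only point where I would have to proceed carefully is precisely this last verification that the minimal injective cogenerator really is two-sided injective in the sense required, but it is a standard consequence of the Morita duality theory recalled in Section 2, so I do not expect any genuine obstacle.
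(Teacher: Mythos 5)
Your proposal is correct and follows essentially the same route as the paper: the preceding discussion reduces Arens $Z$-regularity to reflexivity of all $rR\rho$, the passage to the single family $\{R\rho\}$ uses exactly the identification of reflexive with linearly compact modules via M\"uller's first theorem together with the fact that linear compactness passes to the $C$-submodules $rR\rho\subseteq R\rho$, and the final sentence is handled, as in the paper, by combining \'Anh's theorem with M\"uller's second theorem and Morita's first theorem to see that the minimal injective cogenerator is injective in $\lm{\com{C}}$. Your explicit treatment of the symmetric case via $t_r$ (with range $\rho Rr\subseteq\rho R$) just spells out what the paper delegates to the built-in symmetry of Theorem \ref{th203}.
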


\begin{proof}It only remains to prove the last sentence of the theorem. Since $C$ is commutative and linearly compact,
$C$ admits a Morita duality by a result of Anh \cite{A}, \cite[19.77]{La}. Then by M\" uler's second theorem 
\cite[19.71]{La} the minimal injective cogenerator $Z\in\rmod{C}$ defines a Morita duality from $C$ to $\com{C}$, hence
by Morita's first theorem \cite[19.43]{La} $Z$ is injective (and a faithfully balanced cogenerator) in $\lm{\com{C}}$,
so the first part of the theorem applies.
\end{proof}

\begin{co}\label{co205}If $C$ is linearly compact in $\rmod{C}$, then $C$-subalgebras of an Arens regular $C$-algebra $R$ are Arens regular.
\end{co}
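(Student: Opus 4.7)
The plan is to reduce the assertion to the final criterion of Theorem \ref{th204}: since $C$ is linearly compact in $\rmod{C}$, a $C$-algebra $A$ is Arens regular precisely when $A\tau$ is linearly compact in $\rmod{C}$ for every $\tau\in A^*$. Let $S$ be a $C$-subalgebra of $R$; I would fix an arbitrary $\sigma\in S^*=\lhom{C}{S}{Z}$ and aim to verify that $S\sigma$ is linearly compact in $\rmod{C}$, at which point the theorem applied to $S$ finishes the proof.

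First I would extend $\sigma$ to an element $\rho\in R^*$, which is possible because $Z$ is an injective cogenerator in $\rmod{C}$ and $S$ is a $C$-submodule of $R$, so the restriction map $\pi:R^*\to S^*$ is surjective. Then I would verify the compatibility $(s\rho)|_S=s\sigma$ for each $s\in S$: using $(s\rho)(y)=\rho(ys)$, the fact that $ys\in S$ by multiplicative closure of $S$, and $\rho|_S=\sigma$, I get $(s\rho)(y)=\rho(ys)=\sigma(ys)=(s\sigma)(y)$ for all $y\in S$. Consequently $\pi$ carries $S\rho\subseteq R^*$ onto $S\sigma\subseteq S^*$.

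Finally, the Arens regularity of $R$ combined with Theorem \ref{th204} yields that $R\rho$ is linearly compact in $\rmod{C}$. Since $S\rho$ is a $C$-submodule of $R\rho$, the standard inheritance of linear compactness under submodules gives that $S\rho$ is linearly compact; and because $C$-linear homomorphic images of linearly compact modules are again linearly compact, so is $S\sigma=\pi(S\rho)$. Applying Theorem \ref{th204} to the $C$-algebra $S$ then delivers Arens regularity of $S$. The only step that asks for any genuine care is the bookkeeping identity $(s\rho)|_S=s\sigma$, which rests squarely on the multiplicative closure of $S$; the remainder is a direct appeal to Theorem \ref{th204} together with the permanence properties of linear compactness.
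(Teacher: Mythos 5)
Your proof is correct and follows essentially the same route as the paper: extend the functional to $\rho\in R^*$ by injectivity of $Z$, use Theorem \ref{th204} to get linear compactness (equivalently reflexivity) of $R\rho$, and transfer it to $S\sigma$ via the restriction map using the permanence of linear compactness under submodules and homomorphic images. The paper merely phrases the last step as ``quotients and submodules of reflexive modules are reflexive'' applied to the image of all of $R\rho$ under restriction, whereas you first pass to the submodule $S\rho$ and then to its image; this is an immaterial reordering.
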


\begin{proof}Let $S$ be a $C$-subalgebra of $R$ and $\omega\in S^*$. By the $C$-injectivity of $Z$ we can extend
$\omega$ to $\rho\in R^*$. Then the map $f:R\rho\to S^*$, $f(r\rho):=(r\rho)|S$, is homomorphism of $C$-modules
and $S\omega\subseteq f(R\rho)$. Since quotients of reflexive modules are reflexive by \cite[19.58]{La} (we are here
again in the context of Morita duality as in the proof of Theorem \ref{th204}), $f(R\rho)$ is reflexive (as a $C$-module)
and then its $C$-submodule $S\omega$ is reflexive.
\end{proof}

\section{Arens regularity for algebras over fields}

Let us now consider the special case when $C$ is a field $\f$ and $Z=\f$. Then Theorem \ref{th204} says that $R$ is Arens regular if and only if
$R\rho$ (and $\rho R$) are finite dimensional vector spaces for all $\rho\in R^*$. If, for a given $\rho$, $\{\rho_1,\ldots,\rho_n\}$ is a basis of
$R\rho$, then $N:=\cap_{j=1}^n\ker\rho_j$ is a subspace of finite codimension in $R$ such that $(r\rho)(N)=0$ for all $r\in R$. This means that
$\ker\rho$ contains the right ideal $NR$. Conversely, if $\ker\rho$ contains a right ideal of finite codimension, then clearly $R\rho$ is finite dimensional.
Further, finite dimensionality of $R\rho$ and $\rho R$ for all $\rho\in R^*$ is easily seen to be equivalent to finite 
dimensionality of $R\rho R$
and this in turn means that $\ker\rho$ contains a two-sided ideal of finite codimension in $R$. 
The intersection $J$ of all
such ideals must be $0$. (Otherwise we could find $\rho\in R^*$ with $\rho(J)\ne0$ and then an ideal $J_{\rho}\subseteq
\ker\rho$ of finite codimension, hence $J\subseteq J_{\rho}$, a contradiction with $\rho(J)\ne0=\rho(J_{\rho})$.) This proves
the following proposition.

\begin{pr}\label{prA}For an algebra $R$ over a field $\f$ the following are equivalent:

(i) $R$ is Arens regular.

(ii) $\dim R\rho<\infty$ (or, symmetrically, $\dim\rho R<\infty$) for each $\rho\in R^*$.

(iii) The kernel of each $\rho\in R^*$ contains a left ideal of finite codimension.

(iv) The kernel of each $\rho\in R^*$ contains a two-sided ideal of finite codimension.

\end{pr}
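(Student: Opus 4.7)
The plan is to bootstrap the whole proposition from Theorem \ref{th204}. Specialize that theorem to $C=Z=\f$: then $\com{C}=\lend{\f}{\f}=\f$ is injective in $\lm{\com{C}}$, and $\f$ is certainly linearly compact in $\rmod{\f}$, so Theorem \ref{th204} gives that $R$ is Arens regular iff $R\rho$ is linearly compact in $\rmod{\f}$ for every $\rho\in R^{*}$. For a vector space over a field, however, linear compactness (equivalently, reflexivity $V=V^{**}$ by \cite[19.66]{La}) is the same as finite-dimensionality. This yields $(\mathrm{i})\Leftrightarrow(\mathrm{ii})$, with the symmetric statement $\dim\rho R<\infty$ obtained by applying Theorem \ref{th204} to the opposite algebra (or directly by the symmetric version in the theorem's statement).

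For $(\mathrm{ii})\Rightarrow(\mathrm{iii})$, assume $\dim\rho R<\infty$ and choose a basis $\rho r_{1},\ldots,\rho r_{n}$. Set $M:=\bigcap_{i=1}^{n}\ker(\rho r_{i})$, a subspace of codimension at most $n$ in $R$. Since $(\rho r)(x)=\rho(rx)$, every element of $\rho R$ vanishes on $M$, whence $M\subseteq\ker\rho$ and, for any $s\in R$ and $x\in M$, $(\rho r)(sx)=\rho(rsx)=(\rho(rs))(x)=0$ because $\rho(rs)\in\rho R$; thus $sx\in M$ and $M$ is a left ideal. Conversely, $(\mathrm{iii})\Rightarrow(\mathrm{ii})$ is a one-line computation: if $M\subseteq\ker\rho$ is a left ideal of finite codimension, then $(\rho r)(x)=\rho(rx)=0$ for all $r\in R$, $x\in M$, so $\rho R$ sits inside the annihilator of $M$ in $R^{*}$, which has dimension $\dim(R/M)<\infty$.

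For $(\mathrm{ii})\Rightarrow(\mathrm{iv})$, combine both $\dim R\rho<\infty$ and $\dim\rho R<\infty$: writing $\rho R=\sum_{i}\f(\rho r_{i})$ gives $R\rho R=\sum_{i}R(\rho r_{i})$, which is a finite sum of finite-dimensional $R$-modules (each of the form $R\omega$ for $\omega=\rho r_{i}\in R^{*}$), hence finite-dimensional. Picking a basis $\omega_{1},\ldots,\omega_{m}$ of $R\rho R$ and setting $J:=\bigcap_{i}\ker\omega_{i}$ gives a subspace of finite codimension inside $\ker\rho$ (taking $\omega=\rho$ shows $J\subseteq\ker\rho$). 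Since $R\rho R$ is an $R,R$-subbimodule of $R^{*}$, for every $\omega\in R\rho R$ and $s\in R$ both $\omega s$ and $s\omega$ lie in $R\rho R$; the identities $\omega(sx)=(\omega s)(x)$ and $\omega(xs)=(s\omega)(x)$ then force $sx,xs\in J$, so $J$ is a two-sided ideal. Finally $(\mathrm{iv})\Rightarrow(\mathrm{iii})$ is trivial, closing the cycle.

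There is no serious obstacle; Theorem \ref{th204} carries the analytic content and the rest is bookkeeping with the bimodule actions on $R^{*}$. The only place where one must pay attention is the left/right convention: it is essential that $(\rho r)(x)=\rho(rx)$ rather than $\rho(xr)$, because this is precisely what converts finite-dimensionality of $\rho R$ into a left (rather than right) ideal of finite codimension, and analogously for the two-sided case via the bimodule structure of $R\rho R$.
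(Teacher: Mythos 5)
Your proposal is correct and follows essentially the same route as the paper: the equivalence (i)$\Leftrightarrow$(ii) is obtained by specializing Theorem \ref{th204} to $C=Z=\f$ (where linear compactness/reflexivity of a vector space means finite dimensionality), and (ii)$\Leftrightarrow$(iii)$\Leftrightarrow$(iv) by the same annihilator arguments with $\bigcap_i\ker(\rho r_i)$ and the finite-dimensional bimodule $R\rho R$. The only cosmetic difference is that you observe $\bigcap_i\ker(\rho r_i)$ is itself already a left ideal, whereas the paper passes to the ideal it generates; both are fine.
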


Proposition \ref{prA} restrict the class of Arens regular algebras much more than it might appear on the first sight.

\begin{lemma}\label{leA}If $R$ is Arens regular algebra over a field $\f$ then there exists $n_0\in\bn$ such that $\dim{R\rho}\leq n_0$ for
all $\rho\in R^*$.
\end{lemma}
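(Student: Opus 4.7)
The plan is to argue by contradiction: suppose $\sup_{\rho\in R^*}\dim R\rho=\infty$ and pick $\rho_n\in R^*$ with $\dim R\rho_n\ge n$. The goal is to combine the data of all the $\rho_n$'s into a single $\rho\in R^*$ satisfying $\dim R\rho=\infty$, which will contradict Proposition~\ref{prA}. First, for each $n$ one extracts a biorthogonal system $r_{n,1},\ldots,r_{n,n}\in R$ and $s_{n,1},\ldots,s_{n,n}\in R$ with $\rho_n(s_{n,i}r_{n,j})=\delta_{ij}$; this comes from the nondegenerate evaluation pairing $R\times R\rho_n\to\f$, $(x,\mu)\mapsto\mu(x)$, applied to the linearly independent family $r_{n,1}\rho_n,\ldots,r_{n,n}\rho_n$ that witnesses $\dim R\rho_n\ge n$.

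The core of the proof is an inductive construction of $\rho\in R^*$ together with sequences $(t_k)_{k\in\bn}$ and $(u_k)_{k\in\bn}$ in $R$ satisfying $\rho(u_it_j)=\delta_{ij}$ for all $i,j$. The point of this biorthogonality is that $(t_j\rho)(u_i)=\rho(u_it_j)=\delta_{ij}$, so that the $u_i$'s form a separating set for the family $\{t_j\rho\}\subset R\rho$ and force the $t_j\rho$ to be linearly independent in $R^*$; hence $\dim R\rho=\infty$, the desired contradiction. At stage $k$ of the induction the pair $(t_k,u_k)$ is to be drawn from the $n(k)$-th block $(r_{n(k),\bullet},s_{n(k),\bullet})$, and $\rho$ is declared on the new products $u_it_k,\ u_kt_i\ (i\le k)$ by the biorthogonality requirement; the resulting partial functional, defined on an ever-increasing finite-dimensional subspace of $R$, is extended to all of $R$ by the axiom of choice at the end.

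The main technical obstacle lies in the consistency of the inductive step: the accumulated products $u_it_j$ may satisfy nontrivial linear relations in $R$, and the prescribed biorthogonality values for the new products must not conflict with values of $\rho$ committed at earlier stages. The resolution is to exploit the freedom within each block: because $\dim R\rho_{n(k)}\ge n(k)$ can be taken much larger than the $O(k)$ linear constraints imposed by prior stages, one can perturb the candidate pair $(t_k,u_k)$ within the $n(k)$-dimensional pool of biorthogonal pairs $(r_{n(k),\ell},s_{n(k),\ell})$ so that $u_it_k,\ u_kt_i$ (for $i<k$) and $u_kt_k$ are linearly independent modulo the span of the previously committed products in $R$. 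Once this is arranged, a small linear-system argument confirms that the prescribed values extend consistently to a linear functional on the enlarged finite-dimensional subspace. Carrying out this perturbation argument uniformly across stages, and in particular choosing the subsequence $n(k)$ to grow fast enough relative to the constraint count at each step, is the principal hurdle; but the unboundedness of $\dim R\rho_n$ provides the necessary room at every stage.
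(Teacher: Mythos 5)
Your strategy---manufacture a single $\rho$ with $\dim R\rho=\infty$ by an inductive diagonal construction---is genuinely different from the paper's, which first passes to a countable-dimensional subalgebra generated by the witnesses and then applies the Baire category theorem to $R^*\cong\f^{\bn}$ (product of discrete topologies): the sets $F_n=\{\rho:{\rm rank}\,\mu(\rho)\le n\}$ are closed and cover $\f^{\bn}$, so some $F_n$ has interior, and linearity of $\mu$ upgrades this to a uniform bound. The step your sketch defers to the end is exactly where the construction breaks, and the mechanism you propose does not work. The claim that one can perturb $(t_k,u_k)$ inside the $n(k)$-th block so that $u_it_k$, $u_kt_i$ $(i<k)$ and $u_kt_k$ become linearly independent modulo the span $V_{k-1}$ of the previously committed products is false in general: biorthogonality of a block with respect to $\rho_{n}$ gives no lower bound on the dimension of the span of the products themselves. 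For instance, if $s_{n,i}r_{n,j}=\delta_{ij}e$ for one fixed element $e$ with $\rho_n(e)=1$, then $[\rho_n(s_{n,i}r_{n,j})]$ is the $n\times n$ identity (so the block legitimately witnesses $\dim R\rho_n\ge n$), yet every product of elements drawn from the block lies in the one-dimensional space $\f e$; once $e\in V_{k-1}$, no perturbation within the block moves $u_kt_k$ out of $V_{k-1}$, and if earlier commitments have forced $\rho$ to vanish on the relevant line, the prescription $\rho(u_kt_k)=1$ is simply inconsistent.

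Even apart from that, the truly uncontrolled objects are the cross products $u_it_k$ and $u_kt_i$ with $i<k$, which mix elements of different blocks; nothing constrains where these land in $R$, and a relation such as $u_1t_k+u_2t_k\in V_{k-1}$ with nonzero committed value is a condition on the \emph{system} of prescriptions that is invisible when each product is treated separately, so counting ``$O(k)$ constraints against $n(k)$ dimensions of freedom'' is not by itself a proof. Note also that prescribing only the diagonal $\rho(u_kt_k)=1$ without genuine control of the off-diagonal entries does not force the matrix $[\rho(u_it_j)]$ to have infinite rank. To complete your route you would need a precise lemma of the form: given the committed functional on $V_{k-1}$, there is a choice in the block making the enlarged prescription consistent and the matrix $[\rho(u_it_j)]_{i,j\le k}$ nonsingular. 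No such lemma is proved, and the paper's Baire argument is precisely a device for avoiding it.
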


\begin{proof}Suppose the contrary, that for each $n\in\bn$ there exists a $\rho_n\in R^*$ with $\dim{R\rho_n}\geq n$.
Then for each $n$ we can choose $y_{n,j}\in R$ such that the set $\{y_{n,j}\rho_n:\, j=1,\ldots,n\}$ is linearly 
independent,
which means that the vectors $$(\rho_n(xy_{n,j}))_{x\in R}\in \f^R\ \ (j=1,\ldots,n)$$ are linearly independent. Then, by a standard argument
there exist $x_{n,j}$ ($j=1,\ldots,n$) such that the determinant of the $n\times n$ matrix $[\rho_n(x_{n,i}y_{n,j})]$ is nonzero,
so the restrictions of the functionals $y_{n,j}\rho_n$ ($j=1,\ldots,n$) to the subalgebra $R_n$ of $R$ generated by $\{x_{n,i},y_{n,j}:\, i,j=1,\ldots,n\}$
are linearly independent. Considering the algebra $S$ generated by the union of all $R_n$, we see that $S$ is of countable dimension
over $\f$ and the dimensions $\dim S\rho$ ($\rho\in S^*$) are not bounded. So, replacing $R$ by $S$, we may assume that $R$ has countable dimension
over $\f$. Let $(a_k)_{k\in\bn}$ be a basis of $R$ as a vector space and
$$a_ia_j=\sum_k\mu_{i,j,k}a_k\ \ \ (\mu_{i,j,k}\in\f),$$
where for fixed $i,j$ only finitely many coefficients $\mu_{i,j,k}$ are nonzero. Identify $R^*$ with $\f^{\bn}$ by identifying each 
$\rho\in R^*$ with the sequence $(\rho(a_k)_k)\in\f^{\bn}$. By Proposition \ref{prA} $\dim R\rho<\infty$ and this is equivalent to the condition that
the linear span of $\{((a_j\rho)(a_i))_{i\in\bn}\in\f^{\bn}:\ j\in\bn\}$ is finite dimensional, hence to the condition that the infinite
matrix $[\rho(a_ia_j)]_{i,j\in\bn}$ has finite rank. Since this matrix is equal to
$$[\rho(a_ia_j)]_{i,j\in\bn}=\sum_k[\mu_{i,j,k}]_{i,j}\rho(a_k),$$
we see that the map
\begin{equation}\label{500}\mu:\f^{\bn}\to\mathbb{M}_{\bn}(\f),\ \ \mu((\rho_k)_{k\in\bn}):=\sum_k[\mu_{i,j,k}]_{i,j}\rho_k\end{equation}
contains in its range only finite rank matrices. Let $\f$ be equipped with the discrete topology, which is metrizable by $d(\alpha,\beta)=1$ if 
$\alpha\ne\beta$, so that $\f^{\bn}$ with the product topology is also metrizable by $d((\alpha_k),(\beta_k))=\sum_k2^{-k}d(\alpha_k,\beta_k)$.
Then for each $n$ the set $F_n\subseteq\f^{\bn}$, consisting of all $\rho\in\f^{\bn}$ such that rank of $\mu(\rho)$ is at most $n$, is closed.
(To see this, note that if a matrix $\mu(\rho)$ has rank $\geq n+1$, then some finite submatrix of it, say $I\times J$
submatrix, has rank $\geq n+1$; but since only finitely
many indexes $i,j$ are involved in such a submatrix, it follows that the sum 
$\sum_k[\mu_{i,j,k}]_{(i,j)\in I\times J}\rho_k$ involves only finitely many
components $\rho_k$ of $\rho$, say components with $k\leq k_0$. Then the definition (\ref{500}) of $\mu$ implies that the rank of $\mu(\omega)\geq n+1$ if $\omega$ agrees 
with $\rho$ in the first $k_0$ components.) Since $\cup_nF_n=\f^{\bn}$ by Proposition \ref{prA}, it follows by Baire's theorem that 
$F_n$ has nonempty interior for some $n\in\bn$. If $\rho=(\rho_k)_k$ is an interior point of $F_n$, then there exists $m\in\bn$ such that
every $\omega=(\omega_k)\in\f^{\bn}$ satisfying $\omega_k=\rho_k$ for $k\leq m$ must be in $F_n$. Since a general $\tau\in\f^{\bn}$ can be decomposed
as $\tau=\sum_{k\leq m}\sigma_k\lambda_k+\omega$, where $\lambda_k:\f^{\bn}\to\f$ are the coordinate functionals (that is, $\lambda_k(a_i)=\delta_{k,i}$),
$\sigma_k\in\f$ and $\omega_k=\rho_k$ for $k\leq m$, it follows that
$$\mu(\tau)=\sum_{k\leq m}\sigma_k[\mu_{i,j,k}]_{i,j}+\mu(\omega)$$
has rank at most $\sum_{k\leq m}{\rm rank}([\mu_{i,j,k}]_{i,j})+n$. This proves that 
$$n_0:=\sup_{\tau\in\f^{\bn}}{\rm rank}(\mu(\tau))<\infty.$$ It follows that for each $\rho\in R^*$ the rank of the matrix 
$[\rho(a_ia_j)]_{i,j\in\bn}$ is at most $n_0$, which implies that at most $n_0$ among the vectors $((a_j\rho)(a_i))_{i\in\bn}\in\f^{\bn}$
are linearly independent, that is, $\dim R\rho\leq n_0$.
\end{proof}

\begin{theorem}\label{thA}An infinite dimensional algebra $R$ over a field $\f$ is Arens regular if and only if $R$ 
contains an ideal $J$ of finite codimension such that $J^2=0$.
\end{theorem}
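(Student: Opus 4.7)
The plan is to treat the two implications separately.

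For the ``if'' direction, suppose $J \triangleleft R$ is an ideal of finite codimension with $J^2 = 0$. By Proposition \ref{prA} it suffices to show that $\dim R\rho < \infty$ for every $\rho \in R^*$. Fix a vector-space complement $V$ of $J$ in $R$, with $d := \dim V < \infty$. For $j \in J$ and $x = v + j' \in V \oplus J$ we have $xj = vj + j'j = vj$ because $j'j \in J^2 = 0$, so $(j\rho)(x) = \rho(vj)$ depends only on $v$; in particular $j\rho$ vanishes on $J$, so $\{j\rho : j \in J\}$ lies in $J^\perp \subseteq R^*$, a space of dimension $d$. Together with $\dim V\rho \leq d$, this gives $\dim R\rho \leq 2d < \infty$, whence Arens regularity.

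For the ``only if'' direction, assume $R$ is infinite-dimensional and Arens regular. Applying Lemma \ref{leA} both to $R$ and to its opposite algebra yields $M \in \bn$ with $\dim R\rho \leq M$ and $\dim \rho R \leq M$ for every $\rho \in R^*$; consequently $\dim R\rho R \leq M^2$ and the two-sided ideal $I_\rho := \{r \in R : \rho(RrR)=0\}$ has codimension at most $M^2$ via the injection $R/I_\rho \hookrightarrow (R\rho R)^*$, with $\bigcap_\rho I_\rho = 0$. Let $N_\rho$ be the preimage in $R$ of $\mathrm{rad}(R/I_\rho)$ and set $J := \bigcap_\rho N_\rho$; then $J$ is a nil ideal, and $R/J$ embeds into a product of semisimple $\f$-algebras of dimension $\leq M^2$.

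Two things remain: that $R/J$ is finite-dimensional, and that $J^2 = 0$. For the second, a failure would mean $\mathrm{rad}(R/I_{\rho_0})^2 \neq 0$ for some $\rho_0$; lifting elements whose triple product of radical elements survives in $R/I_{\rho_0}$ and chaining longer and longer such products, one should, in the style of the computation after Proposition \ref{prA} for strictly upper-triangular algebras, construct functionals $\omega_n \in R^*$ with $\dim R\omega_n$ unbounded, contradicting Lemma \ref{leA}. For finite-dimensionality of $R/J$, a Baire-category argument of the same kind as in the proof of Lemma \ref{leA}, applied this time to $(R/J)^*$, should exploit the uniform bound together with residual semisimplicity of $R/J$ to force $R/J$ finite-dimensional. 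The main obstacle is the first of these: converting the purely dimensional hypothesis $\sup_\rho \dim R\rho < \infty$ into the structural conclusion that every local Jacobson radical squares to zero.
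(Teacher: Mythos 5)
Your ``if'' direction is complete and correct, and is in substance the paper's own argument: the paper fixes a complement $X$ of $J$ and produces a left ideal $RJ_\rho\subseteq\ker\rho$ of finite codimension, while you bound $\dim R\rho\le 2\dim(R/J)$ directly by noting $J\rho\subseteq J^{\perp}$; both reduce to Proposition \ref{prA}. The problem is the ``only if'' direction. Your setup (the uniform bound $M$ from Lemma \ref{leA} applied to $R$ and $R^{\mathrm{op}}$, the ideals $I_\rho$ of codimension $\le M^2$ with $\bigcap_\rho I_\rho=0$) is sound, but the two steps you defer --- that $J:=\bigcap_\rho N_\rho$ has finite codimension and that $J^2=0$ --- are exactly the content of the theorem, and the strategy you propose for the second one cannot work. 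You reduce $J^2=0$ to the claim that $\mathrm{rad}(R/I_\rho)^2=0$ for every $\rho$ (``every local Jacobson radical squares to zero''), and that claim is false for Arens regular algebras. Take $R=T_3(\f)\times S$, where $T_3(\f)$ is the algebra of upper triangular $3\times3$ matrices and $S=\f\oplus S_0$ with $S_0$ infinite dimensional and $S_0S_0=0$. Then $0\times S_0$ is an ideal of codimension $7$ with square zero, so $R$ is Arens regular by your own ``if'' direction. Yet for $\rho((a,s)):=\sum_{i\le j}a_{ij}$ one checks $I_\rho=0\times S$, so $R/I_\rho\cong T_3(\f)$ and $\mathrm{rad}(R/I_\rho)^2=\f E_{13}\ne0$. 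Worse, your candidate ideal itself fails here: the nilpotent ideal $N\times S_0$ ($N$ the strictly upper triangular matrices) lies in every $N_\rho$, hence in $J$, so $J^2\supseteq N^2\times 0\ne0$. The correct square-zero ideal in this example ($0\times S_0$) is not the intersection of the radical preimages, so no repair of the argument can proceed through that intersection.

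The finite-codimensionality of $R/J$ is likewise only gestured at (``a Baire-category argument \dots should exploit''), and residual semisimplicity of $R/J$ plus a uniform bound on the local dimensions does not force finite dimension --- $\f^{\bn}$ embeds in a product of one-dimensional algebras. The paper's actual route is quite different and entirely concrete: fix bases $(a_i),(b_i),(c_i)$, encode the multiplication by $\mu(\rho)=[\rho(b_ic_j)]$, let $n_0$ be the maximal rank of $\mu(\rho)$ (finite by Lemma \ref{leA}), normalize so that $\mu(\lambda_1)$ is the rank-$n_0$ corner identity, and then exploit $\mathrm{rank}(t\,\mu(\lambda_1)+\mu(\lambda_k))\le n_0$ for all $t\in\f$ to kill the lower-right corner of every $\mu(\lambda_k)$. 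This yields subspaces $R_1,R_2$ of finite codimension with $R_1R_2=0$, hence $R_0=R_1\cap R_2$ with $R_0^2=0$, which is then shrunk to a two-sided ideal of finite codimension. If you want to salvage your approach, the missing idea is precisely this rank-maximization step; nothing in your outline substitutes for it.
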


\begin{proof}Suppose that $R$ contains an ideal $J$ of finite codimension with $J^2=0$.
Let $X$ be a vector subspace of $R$ such that $R=X\oplus J$. Given $0\ne\rho\in R^*$, let $q:R\to R/\ker\rho=\f$
be the quotient map. For each $x\in X$ the subspace $J_x:=\{a\in J:\, xa\in\ker\rho\}$ is of finite codimension
since $J_x$ is just the kernel of the map $q\lambda_x$, where $\lambda_x$ is the left multiplication by $x$ on $J$.
Since $X$ is finite dimensional, it follows that there exists a subspace $J_{\rho}$ of finite codimension in $J$ such
that $XJ_{\rho}\subseteq\ker\rho$. (Namely, $J_{\rho}=\cap_{j=1}^nJ_{x_j}$, where $\{x_1,\ldots,x_n\}$ is a basis
of $X$.) Since $R=X\oplus J$ and $J^2=0$, it follows that $RJ_{\rho}\subseteq\ker\rho$. 
Thus $RJ_{\rho}$ is a left ideal of finite codimension contained in $\ker{\rho}$, hence $R$ is Arens regular by
Proposition \ref{prA}.

To prove the converse, it is convenient
to choose three basis $(a_i)_{i\in I}$, $(b_i)_{i\in I}$ and $(c_i)_{i\in I}$ for $R$ as a vector space over $\f$, where 
the index set $I=\{1,2,\ldots\}$ is well
ordered, but not necessarily countable. 
(In the beginning these may be the same
basis, but during the course of the proof they will change.)  Let
\begin{equation}\label{501}b_ic_j=\sum_k\mu_{i,j,k}a_k\ \ (\mu_{i,j,k}\in\f)\end{equation}
be the multiplication table of $R$. Identify $R^*$ with $\f^I$ by $\rho\mapsto(\rho(a_i))_{i\in I}$ and 
consider the map $\mu:\f^I=R^*\to\mathbb{M}_I(\f)$ defined by 
$$\mu((\rho)_{k})_{k\in I}=\sum_{k\in I}[\mu_{i,j,k}]_{i,j\in I}\rho_k,$$
where (for fixed $i,j\in I$) only finitely many $\mu_{i,j,k}$ are nonzero. Let $\omega=(\omega_k)\in\f^{I}$ be such
that ${\rm rank}(\mu(\omega))=n_0=\max_{\rho\in\f^{I}}{\rm rank}(\mu(\rho))$ (Lemma \ref{leA}). By a suitable replacement of the basis $(a_i)$ 
we may assume that $\omega$
is just the first coordinate functional $\lambda_1$, where $\lambda_i\in R^*$ are defined by $x=\sum_{i\in I}\lambda_i(x)a_i$.
(For example, if $\omega_1\ne0$, we may replace $a_i$ by $(a_i^{\prime})$,
where $a_1=\omega_1a_1^{\prime}$ and $a_i=a_i^{\prime}+\omega_ia_1^{\prime}$ for $i>1$.)
Thus ${\rm rank}(\mu(\lambda_1))=n_0$.
If we change the basis $(b_i)_{i\in I}$ and $(c_i)_{i\in I}$, it is easy to show
that each of the matrices $[\mu_{i,j,k}]_{i,j\in I}$ changes to an equivalent matrix and we have the elementary row and column
operations at our disposal, so we may assume that the matrix $[\mu_{i,j,1}]_{i,j\in I}=\mu(\lambda_0)$
has the identity matrix $1$ in its $n_0\times n_0$ upper left corner and zeros elsewhere. 

Now fix a $k>1$ and consider the decomposition of the matrix
$$[\mu_{i,j,k}]_{i,j}=\left[\begin{array}{cc}
a&b\\
c&d\end{array}\right],$$
where $a$ is a $n_0\times n_0$ matrix, $d$ is a $(I\setminus \{1,\ldots,n_0\})\times (I\setminus \{1,\ldots,n_0\})$ matrix and so on. From the definition
of $n_0$ we have that ${\rm rank}(t[\mu_{i,j,1}]_{i,j}+[\mu_{i,j,k}]_{i,j})\leq n_0$ for all $t\in\f$, that is
\begin{equation}\label{502} {\rm rank}\left(\left[\begin{array}{cc}
a+t1&b\\
b&d\end{array}\right]\right)\leq n_0\ \ \mbox{for all}\ t\in \f.
\end{equation}
If $\f$ is infinite, there exists $t\in\f$ such that $a+t1$ is invertible. By elementary column operations (that is, multiplying the matrix in 
(\ref{502}) from the right by a suitable invertible matrix) 
it follows that the rank of the matrix
$$\left[\begin{array}{cc}
a+t1&0\\
c&-c(a+t1)^{-1}b+d\end{array}\right]$$
is at most $n_0$. Since $a+t1$ is of size $n_0$, it follows that $-c(a+t1)^{-1}b+d=0$ for each $t\in\f$ such that
$a+t1$ is invertible. Thus $c(a+t1)^{-1}b=d$ is a constant matrix, which implies that $d=0$. (This is obvious if $\f=\bc, \br$ 
since we may let $t\to\infty$. The general case follows from the formula for the inverse of a matrix using subdeterminants,
namely $(a+t1)^{-1}=(p(t))^{-1}q(t)$, where $q(t)$ is a matrix polynomial of degree at most $n_0-1$, while $p(t):=\det(a+t1)$ has degree $n_0$.
The identity $cq(t)b=p(t)d$ therefore implies that $d=0$.) This proves that $\mu_{i,j,k}=0$ for all $k$ if $i>n_0$ and 
$j>n_0$. Thus $R_1:={\rm span}\{b_i:\, i>n_0\}$ and $R_2:={\rm span}\{c_j:\, j>n_0\}$ are subspaces
of finite codimension in $R$ such that $R_1R_2=0$. Putting $R_0:=R_1\cap R_2$, we have found a subspace of finite codimension in $R$
with $R_0^2=0$. Now, since for each $x\in R$ the space $\{s\in R_0:\ xs\in R_0\}$ is of finite codimension (namely, this is just the kernel of the map
$s\mapsto q(xs)$, where $q:R\to R/R_0$ is the quotient map) and $R=R_0\oplus X$ for a finite dimensional $X$, it follows 
(as in the proof of Lemma \ref{leA}) that there exists a
subspace $S$ of finite codimension in $R_0$ such that $XS\subseteq R_0$. Then $RS\subseteq XS+R_0S=XS\subseteq R_0$, so 
$L:=RS$ is a left ideal of finite codimension
(since $S\subseteq RS$) contained in $R_0$. Similarly we can now find a two-sided ideal $J$ of $R$ of finite codimension, contained in $L$, so $J^2=0$.

When $\f$ is finite the argument of the previous paragraph must be replaced
by the following  induction on $n_0$. We consider the compression of the map $\mu$ to the lower right
corner, that is, the map $\tilde{\mu}:\f^{I}\to\mathbb{M}_{\tilde{I}}(\f)$, where $\tilde{I}=I\setminus\{1,\ldots,n_0\}$ and
$\tilde{\mu}(\rho)$ is the lower right $\tilde{I}\times\tilde{I}$ corner of $\mu(\rho)$ for each $\rho\in\f^I$. 
Let $n_1=\max_{\rho\in\f^I}{\rm rank}(\tilde{\mu}(\rho))$. If $n_1<n_0$, we may inductively assume that there exists 
a subset $I_0$ of $\tilde{I}$ with finite complement such that the compression of $\tilde{\mu}$ to the lower right
$I_0\times I_0$ corner is $0$. Then this holds also for the compression of $\mu$ and the proof can be completed as in the
previous paragraph. If $n_1=n_0$ then, by a suitable replacement of vectors $(a_i)_{i\in\tilde{I}}$ by their 
linear combinations,
we may assume that $n_1=\tilde{\mu}(\lambda_{n_0+1})$ (where, as above, $\lambda_i$ are the coordinate functionals relative to
the basis $(a_i)$). Further, by elementary row and column operations which effect only the coordinates $i\in\tilde{I}$
(that is, by replacing $b_i$ and $c_i$ with suitable linear combinations)
we can achieve that the matrix $\tilde{\mu}(\lambda_{n_0+1})$ has the identity matrix in its upper left $n_1\times n_1$
corner and zeros elsewhere. Then the upper left $n_0\times n_0$ corner of $\mu(\lambda_{n_0+1})$ must be zero since
${\rm rank}(\mu(\lambda_{n_0+1}))\leq n_0$. However, then the matrix $\mu(\lambda_1+\lambda_{n_0+1})$ has rank greater 
than $n_0$ which is impossible by the definition of $n_0$. 
\end{proof}

\end{document}